\def\MRbibitem{\@ifnextchar[\my@lbibitem\my@bibitem}
\def\mybiblabel#1#2{\@biblabel{{\hyperref{http://www.ams.org/mathscinet-getitem?mr=#1}{}{}{#2}}}}
\def\myhyperanchor#1{\Hy@raisedlink{\hyper@anchorstart{cite.#1}\hyper@anchorend}}
\def\my@lbibitem[#1]#2#3#4\par{%
  \item[\mybiblabel{#2}{#1}\myhyperanchor{#3}\hfill]#4%
  \@ifundefined{ifbackrefparscan}{}{\BR@backref{#3}}%
  \if@filesw{\let\protect\noexpand\immediate
    \write\@auxout{\string\bibcite{#3}{#1}}}\fi\ignorespaces%
}
\def\my@bibitem#1#2#3\par{%
  \refstepcounter\@listctr
  \item[\mybiblabel{#1}{\the\value\@listctr}\myhyperanchor{#2}\hfill]#3%
  \@ifundefined{ifbackrefparscan}{}{\BR@backref{#2}}%
  \if@filesw\immediate\write\@auxout
    {\string\bibcite{#2}{\the\value\@listctr}}\fi\ignorespaces%
}
\DeclareFontFamily{U} {MnSymbolA}{}
\DeclareFontShape{U}{MnSymbolA}{m}{n}{
   <-6> MnSymbolA5
   <6-7> MnSymbolA6
   <7-8> MnSymbolA7
   <8-9> MnSymbolA8
   <9-10> MnSymbolA9
   <10-12> MnSymbolA10
   <12-> MnSymbolA12}{}
\DeclareFontShape{U}{MnSymbolA}{b}{n}{
   <-6> MnSymbolA-Bold5
   <6-7> MnSymbolA-Bold6
   <7-8> MnSymbolA-Bold7
   <8-9> MnSymbolA-Bold8
   <9-10> MnSymbolA-Bold9
   <10-12> MnSymbolA-Bold10
   <12-> MnSymbolA-Bold12}{}
\DeclareSymbolFont{MnSyA} {U} {MnSymbolA}{m}{n}
 \DeclareFontFamily{U} {MnSymbolC}{}
\DeclareFontShape{U}{MnSymbolC}{m}{n}{
  <-6> MnSymbolC5
  <6-7> MnSymbolC6
  <7-8> MnSymbolC7
  <8-9> MnSymbolC8
  <9-10> MnSymbolC9
  <10-12> MnSymbolC10
  <12-> MnSymbolC12}{}
\DeclareFontShape{U}{MnSymbolC}{b}{n}{
  <-6> MnSymbolC-Bold5
  <6-7> MnSymbolC-Bold6
  <7-8> MnSymbolC-Bold7
  <8-9> MnSymbolC-Bold8
  <9-10> MnSymbolC-Bold9
  <10-12> MnSymbolC-Bold10
  <12-> MnSymbolC-Bold12}{}
\DeclareSymbolFont{MnSyC} {U} {MnSymbolC}{m}{n}
\DeclareMathSymbol{\top}{\mathord}{MnSyA}{219} 
\DeclareMathSymbol{\plus}{\mathord}{MnSyC}{20} 
\declaretheorem[numberwithin=section]{theorem}
\declaretheorem[sibling=theorem]{lemma}
\declaretheorem[sibling=theorem]{corollary}
\declaretheorem[sibling=theorem]{proposition}
\declaretheorem[sibling=theorem,style=definition]{definition}
\declaretheorem[sibling=theorem,style=remark]{remark}
\numberwithin{equation}{section}     
\setlist[enumerate,1]{label={\upshape(\alph*)},ref=\alph*}
\setlist[enumerate,2]{label={\upshape(\arabic*)},ref=\arabic*}
\newcommand{\R}{\mathbb{R}}
\newcommand{\Z}{\mathbb{Z}}
\newcommand{\N}{\mathbb{N}}
\newcommand{\M}{\mathcal{M}}
\newcommand{\Rnon}{\mathbb{R}_{\plus}}    
\newcommand{\Rpos}{\mathbb{R}_{\plus\plus}} 
\newcommand{\Mat}[2][]{\ifthenelse{\equal{#1}{}}{\R^{{#2}\times{#2}}}{\R^{{#1}\times{#2}}}}
\newcommand{\Man}[2][]{\ifthenelse{\equal{#1}{}}{\Rnon^{{#2}\times{#2}}}{\Rnon^{{#1}\times{#2}}}}
\newcommand{\Map}[2][]{\ifthenelse{\equal{#1}{}}{\Rpos^{{#2}\times{#2}}}{\Rpos^{{#1}\times{#2}}}}
\renewcommand{\epsilon}{\varepsilon}
\renewcommand{\phi}{\varphi}
\renewcommand{\setminus}{\smallsetminus}
\begin{document}

\title{Upper semi-continuity of entropy in non-compact settings}
\date{\today}

\subjclass[2010]{Primary 37A05; Secondary 37A35.}

\begin{thanks}
{We would like to thank Jerome Buzzi, Sylvain Crovisier and Omri Sarig for useful comments and remarks. 
G.I.\ was partially supported by CONICYT PIA ACT172001 and by Proyecto Fondecyt 1150058.}
\end{thanks}

\author[G.~Iommi]{Godofredo Iommi} \address{Facultad de Matem\'aticas,
Pontificia Universidad Cat\'olica de Chile (PUC), Avenida Vicu\~na Mackenna 4860, Santiago, Chile}
\email{giommi@mat.puc.cl}
\urladdr{http://www.mat.uc.cl/~giommi/}
\author[M.~Todd]{Mike Todd}
\address{Mathematical Institute,
University of St Andrews,
North Haugh,
St Andrews,
KY16 9SS,
Scotland} 
\email{m.todd@st-andrews.ac.uk}

\urladdr{http://www.mcs.st-and.ac.uk/~miket/}
 \author[A.~Velozo]{An\'ibal Velozo}  \address{Department of Mathematics, Yale University, New Haven, CT 06511, USA.}
\email{anibal.velozo@yale.edu }
\urladdr{https://gauss.math.yale.edu/~av578/}

\maketitle

\begin{abstract}
We prove that the entropy map for countable Markov shifts of finite entropy is upper semi-continuous at ergodic measures. Note that the phase space is non-compact.
  We also discuss the related problem of existence of measures of maximal entropy.
  \end{abstract}

\section{Introduction}

The entropy map of  the continuous transformation $T:X \to X$ defined on a metric space $(X,d)$ is the map $\mu \mapsto h_{\mu}(T)$ which is defined on the space of $T-$invariant probability measures $\M_T(X)$, where $h_{\mu}(T)$ is the entropy of $\mu$ (precise definitions can be found in Section \ref{sec:pre}).  The study of the continuity properties of the entropy map, with $\M_T(X)$ endowed with the weak$^*$  topology,  goes back at least to the work of Bowen \cite{bo1}.  In general it is not a continuous map (see \cite[p.184]{wa}). However, in certain relevant cases it can be shown that the map is upper semi-continuous. For example, if $T$ is an expansive homemorphism of a compact metric space then the entropy map is upper semi-continuous, see \cite[Theorem 8.2]{wa} and \cite{bo1,de,mi}. It is also known that if $T$ is a $C^{\infty}$ map defined over a smooth compact manifold  then, again, the entropy map is upper semi-continuous (see \cite[Theorem 4.1]{n} and \cite{y}). Lyubich \cite[Corollary 1]{ly}  proved that the entropy map is upper semi-continuous for rational maps of the Riemann sphere. In all the above examples the phase space is compact: in this article we will drop the compactness assumption on the underlying space.

Markov shifts defined over finite alphabets have been used with remarkable success to study uniformly hyperbolic systems. Indeed, these systems possess finite Markov partitions and are, therefore, semi-conjugated to Markov shifts. See \cite{bo2} for an example of the wealth of results that can be obtained with this method. Following the 2013 work of Sarig \cite{sa3}, countable Markov partitions have been constructed for a wide range of dynamical systems defined on compact spaces  (see \cite{bu, lm, ls, o}). The symbolic coding captures a relevant part, though not all, of the dynamics. For example, diffeomorphisms defined on compact manifolds  have countable Markov partitions that capture all hyperbolic measures. These  results have prompted a renewed interest in the ergodic properties of countable Markov shifts.  

In this paper
we study countable Markov shifts $(\Sigma, \sigma)$. More precisely, the shift on sequences of elements of a countable alphabet in which the transitions, allowed and forbidden, are described by a directed graph (see Section \ref{sec:cms} for definitions). The space $\Sigma$ is not compact with respect to its natural topology. If the alphabet is finite, and therefore the space $\Sigma$ is compact,  it is a classical result that the entropy map is upper semi-continuous \cite[Theorem 8.2]{wa}.   In this article we recover upper semi-continuity in the ergodic case for shifts of finite topological entropy defined on non-compact spaces.
Our main result is:

\begin{theorem} \label{thm:usc}
Let $(\Sigma, \sigma)$ be a  two-sided countable Markov shift of finite topological entropy. Then the entropy map is upper  semi-continuous on the set of ergodic measures.  
\end{theorem}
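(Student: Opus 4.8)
The plan is to reduce to the classical upper semi-continuity theorem for subshifts on a finite alphabet by truncating the natural generating partition, and then to absorb the truncation error into a tail term governed by the finite topological entropy. Write $\mathcal{P}=\{[i]\}$ for the partition of $\Sigma$ into one-cylinders; it consists of clopen sets and generates, so $h_\nu(\sigma)=h_\nu(\sigma,\mathcal{P})$ for every $\nu\in\M_\sigma(\Sigma)$. The one elementary input from the compact theory is that for any \emph{finite} clopen partition $\mathcal{Q}$ the map $\nu\mapsto h_\nu(\sigma,\mathcal{Q})$ is upper semi-continuous: each atom of $\bigvee_{j=0}^{k-1}\sigma^{-j}\mathcal{Q}$ is clopen, so $\nu\mapsto \tfrac1k H_\nu(\bigvee_{j=0}^{k-1}\sigma^{-j}\mathcal{Q})$ is continuous in the weak$^*$ topology, and $h_\nu(\sigma,\mathcal{Q})$ is the infimum over $k$ of these continuous functions. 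I will apply this to the finite truncations $\mathcal{P}_N=\{[1],\dots,[N],T_N\}$, where $T_N=\bigcup_{i>N}[i]$.

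Since $\mathcal{P}$ refines $\mathcal{P}_N$, the standard inequality for conditional entropy gives, for every invariant $\nu$,
\[
 h_\nu(\sigma)\ \le\ h_\nu(\sigma,\mathcal{P}_N)+H_\nu(\mathcal{P}\mid\mathcal{P}_N),
\]
and a direct computation identifies the second term as the tail partition entropy
\[
 H_\nu(\mathcal{P}\mid\mathcal{P}_N)=\sum_{i>N}\varphi(\nu([i]))-\varphi(\nu(T_N)),\qquad \varphi(x)=-x\log x .
\]
Applying this to each $\mu_n$ and using the upper semi-continuity of the finite-partition entropy from the first step yields
\[
 \limsup_{n}h_{\mu_n}(\sigma)\ \le\ h_\mu(\sigma,\mathcal{P}_N)+\limsup_{n}H_{\mu_n}(\mathcal{P}\mid\mathcal{P}_N).
\]
As $N\to\infty$ one has $h_\mu(\sigma,\mathcal{P}_N)\uparrow h_\mu(\sigma)$, because $\mathcal{P}_N\uparrow\mathcal{P}$, the partition $\mathcal{P}$ generates, and $h_\mu(\sigma)\le h_{\mathrm{top}}(\sigma)<\infty$. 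Hence the theorem reduces to showing that the tail term is asymptotically negligible, i.e. $\lim_{N\to\infty}\limsup_{n}H_{\mu_n}(\mathcal{P}\mid\mathcal{P}_N)=0$.

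This last limit is the crux, and it is precisely here that non-compactness is felt. Convergence to the probability measure $\mu$ forces the tail \emph{mass} to be small uniformly, $\sup_n\mu_n(T_N)\to0$ as $N\to\infty$, but this alone does not bound $H_{\mu_n}(\mathcal{P}\mid\mathcal{P}_N)$, since the $\mu_n$ could in principle spread a tiny tail mass thinly across infinitely many high symbols and so carry a definite amount of partition entropy near infinity. The plan is to rule this out using the finiteness of the topological entropy, which caps the exponential growth rate available to the symbols beyond $N$ and thereby bounds the entropy that the tail can dynamically support in terms of its mass; this should produce a modulus $\varepsilon_N\to0$ with $\limsup_n H_{\mu_n}(\mathcal{P}\mid\mathcal{P}_N)\le\varepsilon_N$. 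Ergodicity of the limit $\mu$ is what prevents entropy from surviving at infinity along the sequence: for a non-ergodic limit one can arrange a definite amount of entropy to concentrate on the escaping part so that the inequality genuinely fails, whereas extremality of $\mu$ forces the escaping tail to contribute nothing beyond $h_\mu(\sigma)$. I expect this uniform control of the tail conditional entropy, combining finite topological entropy with the ergodicity of the limit, to be the main difficulty of the proof.
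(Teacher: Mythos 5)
Your reduction to the finite clopen partitions $\mathcal{P}_N$ is sound, and you correctly identify that everything hinges on the tail term; but that is exactly where the proposal has a genuine gap, and not merely one of missing details: the claim $\lim_{N\to\infty}\limsup_n H_{\mu_n}(\mathcal{P}\mid\mathcal{P}_N)=0$ is false in general, so the route cannot be completed as written. Consider a loop system of finite Gurevich entropy with $c_k\approx e^{\lambda k}$ simple loops of length $k+1$ based at a vertex $a$ (finitely many loops of each length, so the shift has finite entropy, but the alphabet is infinite). Let $\mu$ be the ergodic measure induced by a Bernoulli choice of loops with law $(q_k)_k$ (length $k$ with probability $q_k$, then uniformly among the $c_k$ loops of that length), with $Z=\sum_k(k+1)q_k<\infty$, and let $\mu_n$ be the analogous measure for $q^{(n)}$ obtained from $q$ by moving mass $\epsilon_n=K_n^{-2}$ onto length $K_n\to\infty$. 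Then $\mu_n\to\mu$ in the weak$^*$ topology (all cylinder masses converge, since $\epsilon_nK_n\to0$), each $\mu_n$ is ergodic, and in fact $h_{\mu_n}\to h_\mu$; yet for every fixed $N$ the states lying on the loops of length $K_n$ contribute at least $\frac{K_nq^{(n)}_{K_n}}{Z_n}\log c_{K_n}\ge\frac{\lambda K_n^2\epsilon_n}{Z_n}\to\frac{\lambda}{Z}>0$ to $\sum_{i>N}\varphi(\mu_n([i]))$, so $\limsup_nH_{\mu_n}(\mathcal{P}\mid\mathcal{P}_N)\ge\lambda/Z$ for all $N$. The underlying reason is that the static one-symbol tail entropy overcounts the dynamical entropy carried by a long excursion by a factor of the excursion length, so the inequality $h_{\mu_n}(\sigma)\le h_{\mu_n}(\sigma,\mathcal{P}_N)+H_{\mu_n}(\mathcal{P}\mid\mathcal{P}_N)$ is too lossy to close the argument; neither finite topological entropy nor ergodicity of the limit rescues it. (A uniform bound on the genuinely dynamical tail quantity $\tfrac1k H(\mathcal{P}^k\mid\mathcal{P}_N^k)$ is possible, but that is a substantially harder ``entropy at infinity'' estimate, not a consequence of the growth cap you invoke.)

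The paper avoids the tail altogether by producing a \emph{finite} generating partition with empty (hence null) boundary: for a symbol $a$ with $\mu([a])>0$ it restricts to the set $\Sigma_a$ of points visiting $[a]$ infinitely often in both directions (which has full measure for all relevant ergodic measures by Birkhoff --- this is where ergodicity enters), shows $(\Sigma_a,\sigma)$ is topologically conjugate to a loop system, continuously embeds that loop system into a compact SFT (Boyle--Buzzi--G\'omez), pulls back the clopen one-cylinder generator, and extends it to a finite partition of $\Sigma$ with zero-measure boundary; upper semi-continuity then follows from Proposition \ref{prop:usc}. If you wish to salvage a truncation strategy, you would need to control conditional entropies of $n$-blocks rather than of single symbols, which is a different and considerably more delicate argument.
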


This has the following corollary: 

\begin{corollary} \label{cor:uscN}
If $(\Sigma^{+},\sigma)$ is a one-sided  countable Markov shift of finite topological entropy. Then the  entropy map is upper semi-continuous on the set of ergodic measures.
\end{corollary}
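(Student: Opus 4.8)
The plan is to deduce Corollary \ref{cor:uscN} from Theorem \ref{thm:usc} by passing to the natural extension. Given a one-sided countable Markov shift $(\Sigma^{+},\sigma)$, let $(\Sigma,\sigma)$ denote its natural extension: the two-sided countable Markov shift built from the same transition graph, together with the factor map $\pi\colon\Sigma\to\Sigma^{+}$ erasing the negative coordinates. Since $\Sigma$ and $\Sigma^{+}$ are built from the same transition graph they have the same topological entropy, so $(\Sigma,\sigma)$ again has finite topological entropy and Theorem \ref{thm:usc} applies to it. The map $\pi$ induces a bijection $\mu\mapsto\hat\mu:=\pi_{*}^{-1}\mu$ between $\M_{\sigma}(\Sigma^{+})$ and $\M_{\sigma}(\Sigma)$: every $\sigma$-invariant probability measure on $\Sigma^{+}$ lifts to a unique $\sigma$-invariant measure on $\Sigma$, this being the defining property of the natural extension. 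As $(\Sigma,\sigma,\hat\mu)$ is the canonical invertible model of $(\Sigma^{+},\sigma,\mu)$, the correspondence $\mu\mapsto\hat\mu$ preserves both ergodicity and measure-theoretic entropy, so $h_{\hat\mu}(\sigma)=h_{\mu}(\sigma)$ for every $\mu$.

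Granting that the lifting map $\mu\mapsto\hat\mu$ is continuous for the weak$^{*}$ topologies, the corollary follows at once. Indeed, fix an ergodic $\mu\in\M_{\sigma}(\Sigma^{+})$ and a sequence $\mu_{n}\to\mu$. Lifting gives $\hat\mu_{n}\to\hat\mu$ with $\hat\mu$ ergodic, so Theorem \ref{thm:usc} yields $\limsup_{n} h_{\hat\mu_{n}}(\sigma)\le h_{\hat\mu}(\sigma)$; transporting this back through the entropy-preserving correspondence gives $\limsup_{n} h_{\mu_{n}}(\sigma)\le h_{\mu}(\sigma)$, which is upper semi-continuity at $\mu$.

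The main work, and the only real obstacle I anticipate, is the continuity of $\mu\mapsto\hat\mu$. First I would verify convergence on cylinders: by $\sigma$-invariance the value of $\hat\mu_{n}$ on any two-sided cylinder equals the value of $\mu_{n}$ on a one-sided cylinder obtained by shifting it into nonnegative coordinates, and since one-sided cylinders are clopen, $\mu_{n}\to\mu$ forces $\hat\mu_{n}(C)\to\hat\mu(C)$ for every cylinder $C$. Convergence on cylinders alone is not enough in this non-compact setting, since mass could escape to infinity, so the second and more delicate step is to show that the family $\{\hat\mu_{n}\}$ is uniformly tight. Here I would invoke Prokhorov's theorem to get a uniformly tight family $\{\mu_{n}\}$ on $\Sigma^{+}$, control the nonnegative coordinates of the lifts directly via $\pi_{*}\hat\mu_{n}=\mu_{n}$, and control the negative coordinates using $\sigma$-invariance through the identity $\hat\mu_{n}(\{x_{-j}\notin G\})=\hat\mu_{n}(\{x_{0}\notin G\})$; choosing the finite symbol sets coordinatewise with error $\varepsilon 2^{-|i|}$ then produces a single compact subset of $\Sigma$ carrying $\hat\mu_{n}$-mass at least $1-\varepsilon$ for all $n$. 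With tightness in hand, every subsequence of $\{\hat\mu_{n}\}$ has a weak$^{*}$ convergent sub-subsequence whose limit agrees with $\hat\mu$ on all cylinders, hence equals $\hat\mu$ by a $\pi$-system argument; therefore $\hat\mu_{n}\to\hat\mu$, as required. The remaining points, namely preservation of topological entropy and the measure-theoretic behaviour of the correspondence, are standard facts about natural extensions and I would only cite them.
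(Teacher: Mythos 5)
Your proposal is correct and follows the same overall skeleton as the paper's proof: pass to the natural extension $(\Sigma,\sigma)$, use the standard entropy- and ergodicity-preserving bijection between ergodic measures of $(\Sigma^{+},\sigma)$ and of $(\Sigma,\sigma)$, establish weak$^{*}$ continuity of the lifting map, and then invoke Theorem~\ref{thm:usc}. The genuine difference is in how that continuity is proved. The paper (Lemma~\ref{lem:conv}) reduces to bounded Lipschitz test functions via Remark~\ref{rem:lip} and then cites Daon's theorem that every Lipschitz function on the two-sided shift is cohomologous to one depending only on future coordinates, so that integrals against the lifted measures can be computed on $\Sigma^{+}$. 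You instead argue measure-theoretically: convergence on two-sided cylinders follows from shift-invariance together with the Portmanteau theorem (cylinders are clopen, so have null boundary), and the passage from cylinder convergence to weak$^{*}$ convergence --- exactly the point where mass could escape in this non-compact setting --- is handled by uniform tightness of $\{\hat\mu_{n}\}$, obtained from Prokhorov's theorem on the Polish space $\Sigma^{+}$ plus the invariance identity $\hat\mu_{n}(\{x_{-j}\notin G\})=\mu_{n}(\{x_{0}\notin G\})$ to control negative coordinates, followed by a $\pi$-system identification of subsequential limits. Both arguments are correct; yours is more elementary and self-contained (no cohomology machinery, only Prokhorov and Portmanteau), at the cost of being longer, while the paper's is shorter but leans on a nontrivial external result.
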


The strategy of the proof is the following. First note that if there exists a finite generating partition of the space such that the measure of its boundary is zero for every invariant measure, then the entropy map is upper semi-continuous. This is no longer true for countable generating partitions. Krieger \cite{kr}  constructed a finite generating partition for each ergodic measure. More recently, Hochman \cite{h1,h2} constructed finite partitions that are generating for every ergodic  measure. We can not use the result by Krieger  since we need the same partition for every measure and we can not use Hochman's result since his partitions have a large boundary. We overcome this difficulty by constructing finite generating partitions with no boundary.

If  $T:X \to X$ is a continuous transformation defined on a compact space for which the entropy map is upper semi-continuous then there exists a measure of maximal entropy. Diffeomorphisms of class $C^r$, for any $r \in [1, \infty)$ (in the compact setting), with no measure of maximal entropy have been constructed by  Misiurewicz \cite{mi1} and Buzzi \cite{bu1}. These are examples for which the entropy map is not upper semi-continuous. If the space $X$ is non-compact, even if the entropy map is upper semi-continuous, measures of maximal entropy might not exist. Indeed, in the non-compact case the space of invariant probability measures might also be non-compact and therefore a sequence of measures with entropy converging to the topological entropy may not have a convergent subsequence. For instance, consider the geodesic flow on a non-compact  pinched negatively curved manifold.  Velozo \cite{v} showed that in that setting the entropy map is always upper semi-continuous, but there are examples for which there is no measure of maximal entropy (see for example \cite{dpps}). In Section \ref{sec:mme} we discuss conditions that guarantee the existence of measures of maximal entropy. We also address the relation between upper semi-continuity of the entropy map and thermodynamic formalism. More precisely,  we provide conditions for the existence of equilibrium measures.   Finally, in Section  \ref{sec:flo} we apply our results to suspension flows.

We would expect our results to pass to some systems which are coded by countable Markov shifts, but with the important caveat that it is not always straightforward to compare topologies on the shift versus the coded space, so weak$^*$ convergence in one space may not imply weak$^*$ convergence in the other.

\section{Entropy and generators} \label{sec:pre}
This  section is devoted to recall basic properties and definitions that will be used throughout the article. The reader is referred to \cite{d, p,wa} for more details. 

\subsection{The weak$^*$  topology}
Let $(X,d)$ be a metric space, we denote by $C_b(X)$ the space bounded continuous functions $\phi:X\to \R$. Denote by $\M(X)$ the set of Borel probability measures on the metric space $(X,d)$. 

\begin{definition} \label{def:wc}
A sequence of probability measures $(\mu_n)_n$ defined on a metric space $(X,d)$ converges to a measure $\mu$ in the weak$^*$  topology if for every $\phi \in C_b(X)$ we have 
\begin{equation*}
\lim_{n \to \infty} \int \phi~d \mu_n = \int \phi~d \mu.
\end{equation*}
\end{definition}

\begin{remark} \label{rem:lip}
In this notion of convergence we can replace the set of test functions  by  the space of bounded Lipschitz functions (see \cite[Theorem 13.16 (ii)]{kl}). That is, if for every bounded Lipschitz function $\phi:X\to \R$ we have 
\begin{equation*}
\lim_{n \to \infty} \int \phi~d \mu_n = \int \phi~d \mu.
\end{equation*}
then the sequence $(\mu_n)_n$ converges in the weak$^*$  topology to $\mu$.
\end{remark}

If the space $(X,d)$ is compact then  so is $\M(X)$ with respect to the weak$^*$  topology (see \cite[Theorem 6.4]{par}). For $A \subset X$ denote by  $\text{int }A ,  \overline{A}$ and $\partial A$ the interior, the closure and   the boundary of the set $A$, respectively. The following result characterises the weak$^*$  convergence (see \cite[Theorem 6.1]{par}), 

\begin{theorem}[Portmanteau Theorem] \label{thm:port}
Let $(X,d)$ be a metric space and $(\mu_n)_n, \mu $ measures  in $\M(X)$. The following statements are equivalent:
\begin{enumerate}
\item The sequence $(\mu_n)_n$ converges in the  weak$^*$  topology to the measure $\mu$.
\item If $C \subset X$ is a closed set then $\limsup_{n \to \infty} \mu_n(C) \leq \mu(C)$.
\item If $O \subset X$ is an open set then $\liminf_{n \to \infty} \mu_n(C) \geq \mu(C)$.
\item If $A \subset X$ is a set  such that $\mu (\partial A)=0$  then $\lim_{n \to \infty} \mu_n(A) = \mu(A)$.
\end{enumerate}
\end{theorem}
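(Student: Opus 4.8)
The plan is to prove the four statements equivalent by establishing the cycle $(1)\Rightarrow(2)\Leftrightarrow(3)\Rightarrow(4)\Rightarrow(1)$, which is the standard route for the Portmanteau theorem. The only genuinely analytic inputs are a Lipschitz approximation of the indicator of a closed set and a layer-cake (Tonelli) representation of the integral; everything else is bookkeeping with $\limsup$ and $\liminf$ and with continuity of a finite measure along monotone sequences of sets.

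For $(1)\Rightarrow(2)$, I would fix a closed set $C$ and, for each $\delta>0$, introduce the bounded Lipschitz function $\phi_\delta(x)=\max\{0,\,1-\delta^{-1}\dist(x,C)\}$, which equals $1$ on $C$, vanishes outside the open $\delta$-neighbourhood $C^\delta=\{x:\dist(x,C)<\delta\}$, and satisfies $0\le\phi_\delta\le 1$. Since $\mathbf 1_C\le\phi_\delta\le\mathbf 1_{C^\delta}$, the convergence $\int\phi_\delta\,d\mu_n\to\int\phi_\delta\,d\mu$ (available from $(1)$ via Remark \ref{rem:lip}) gives $\limsup_{n}\mu_n(C)\le\int\phi_\delta\,d\mu\le\mu(C^\delta)$. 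Letting $\delta\downarrow0$ and using $\bigcap_{\delta>0}C^\delta=\overline C=C$ together with continuity of $\mu$ from above yields $\limsup_n\mu_n(C)\le\mu(C)$. The equivalence $(2)\Leftrightarrow(3)$ is then immediate by passing to complements, since for open $O$ the set $X\setminus O$ is closed and $\mu_n(O)=1-\mu_n(X\setminus O)$.

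For $(2)$ and $(3)$ together implying $(4)$, I would use that for any $A$ one has $\text{int }A\subset A\subset\overline A$ with $\overline A\setminus\text{int }A=\partial A$; the hypothesis $\mu(\partial A)=0$ then forces $\mu(\text{int }A)=\mu(A)=\mu(\overline A)$. Applying $(3)$ to the open set $\text{int }A$ and $(2)$ to the closed set $\overline A$ gives
\[
\mu(A)=\mu(\text{int }A)\le\liminf_n\mu_n(A)\le\limsup_n\mu_n(A)\le\mu(\overline A)=\mu(A),
\]
so the $\liminf$ and $\limsup$ coincide and equal $\mu(A)$, which is exactly $(4)$.

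Finally, $(4)\Rightarrow(1)$ is the step I expect to be the most delicate. Fixing $\phi\in C_b(X)$ and, after an affine normalisation, assuming $0\le\phi\le 1$, I would write $\int\phi\,d\nu=\int_0^1\nu(\{\phi>t\})\,dt$ for each measure $\nu$ by Tonelli. The superlevel set $\{\phi>t\}$ is open, so its boundary is contained in the level set $\{\phi=t\}$; since these level sets are pairwise disjoint and $\mu$ is finite, $\mu(\{\phi=t\})>0$ for at most countably many $t$, whence $\mu(\partial\{\phi>t\})=0$ for Lebesgue-almost every $t$. For each such $t$, $(4)$ yields $\mu_n(\{\phi>t\})\to\mu(\{\phi>t\})$, and the bounded convergence theorem (integrands dominated by $1$ on $[0,1]$) upgrades this to $\int\phi\,d\mu_n\to\int\phi\,d\mu$, establishing $(1)$. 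The care needed here is precisely in justifying that only countably many levels are charged by $\mu$ and in interchanging the limit with the integral over $t$, which is where the argument is least automatic.
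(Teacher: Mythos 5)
Your proof is correct and is the standard textbook argument; the paper itself offers no proof of this statement, citing it directly from Parthasarathy, and your cycle $(1)\Rightarrow(2)\Leftrightarrow(3)\Rightarrow(4)\Rightarrow(1)$ — with the Lipschitz functions $\phi_\delta(x)=\max\{0,1-\delta^{-1}\dist(x,C)\}$ squeezing $\mathbf{1}_C$ from above, complementation for $(2)\Leftrightarrow(3)$, the sandwich $\mu(\text{int }A)\le\liminf\le\limsup\le\mu(\overline A)$ for $(4)$, and the layer-cake plus ``at most countably many charged level sets'' argument for $(4)\Rightarrow(1)$ — is exactly the proof found in that source. The two delicate points you flag are handled correctly: $\partial\{\phi>t\}\subset\{\phi=t\}$ because the superlevel set is open and its closure lies in $\{\phi\ge t\}$, and the interchange of limit and $\int_0^1\cdot\,dt$ is legitimate by bounded convergence since the integrands are monotone in $t$ (hence measurable) and bounded by $1$; note also that you have silently corrected the typo in item (3) of the statement, where $\mu_n(C)$ should read $\mu_n(O)$.
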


Let $T:(X,d) \to (X,d)$ be a continuous dynamical system,  denote by $\M_T$ the space of $T-$invariant probability measures. 

\begin{proposition} \label{prop_inv}
Let $T:(X,d) \to (X,d)$ be a continuous dynamical system defined on a metric space, then
\begin{enumerate}
\item The space $\M_T$ is closed in the weak$^*$  topology (\cite[Theorem 6.10]{wa}).
\item  If $X$ is compact then so is $\M_T$  with respect to the weak$^*$  topology (see \cite[Theorem 6.10]{wa}).
\item The space $\M_T$ is a convex set for which its extreme points are the ergodic measures  (see \cite[Theorem 6.10]{wa}). It is actually a Choquet simplex (each measure is represented in a unique way as a generalized convex combination of the ergodic measures  \cite[p.153]{wa}).
\end{enumerate}
\end{proposition}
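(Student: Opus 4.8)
The plan is to treat the three assertions separately, in increasing order of difficulty, relying on the weak$^*$ machinery already recorded (Definition~\ref{def:wc}, Remark~\ref{rem:lip}, the Portmanteau Theorem~\ref{thm:port}, and the compactness of $\M(X)$ for compact $X$ from \cite[Theorem 6.4]{par}).

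For part (a) I would take a sequence $(\mu_n)_n$ in $\M_T$ converging weak$^*$ to some $\mu \in \M(X)$ and verify that $\mu$ is $T$-invariant. The key point is that continuity of $T$ guarantees $\phi \circ T \in C_b(X)$ whenever $\phi \in C_b(X)$; hence both $\int \phi \dd\mu_n \to \int\phi\dd\mu$ and $\int \phi\circ T\dd\mu_n \to \int\phi\circ T\dd\mu$. Since each $\mu_n$ is invariant, $\int \phi\circ T\dd\mu_n = \int\phi\dd\mu_n$ for every $n$, and passing to the limit yields $\int \phi\circ T\dd\mu=\int\phi\dd\mu$ for all $\phi\in C_b(X)$. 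This identity says that the pushforward $T_*\mu$ and $\mu$ assign the same integral to every bounded continuous function, and since a Borel probability measure on a metric space is determined by its action on $C_b(X)$, we conclude $T_*\mu=\mu$, i.e.\ $\mu\in\M_T$. The only mild subtlety is this last determinacy step, a standard regularity argument. Part (b) is then immediate: $\M(X)$ is weak$^*$ compact, and by (a) the set $\M_T$ is a closed subset of it, hence compact.

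For part (c) I would first note that convexity is trivial, since the invariance condition $\int\phi\circ T\dd\mu=\int\phi\dd\mu$ is linear in $\mu$. To identify the extreme points with the ergodic measures I would argue both directions. If $\mu$ is not ergodic, choose an invariant set $A$ with $0<\mu(A)<1$ and write $\mu = \mu(A)\,\mu_A + \mu(A^c)\,\mu_{A^c}$, where $\mu_A(\cdot)=\mu(\,\cdot\cap A)/\mu(A)$; both conditional measures are invariant and distinct, so $\mu$ is not extreme. Conversely, if $\mu$ is ergodic and $\mu=t\mu_1+(1-t)\mu_2$ with $t\in(0,1)$ and $\mu_i\in\M_T$, then $\mu_1\ll\mu$ and the Radon--Nikodym derivative $\mathrm{d}\mu_1/\mathrm{d}\mu$ is $T$-invariant, hence $\mu$-a.e.\ constant by ergodicity, forcing $\mu_1=\mu_2=\mu$; thus $\mu$ is extreme.

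I expect the genuinely hard part to be the final claim that $\M_T$ is a Choquet simplex, that is, that every $\mu\in\M_T$ admits a \emph{unique} representation as a generalized convex combination of ergodic measures. Existence is the ergodic decomposition theorem; \emph{uniqueness} is the real obstacle and rests on the fact that two distinct ergodic measures are mutually singular, each being carried by its own set of generic points, and these sets being disjoint by the Birkhoff ergodic theorem. This mutual singularity is precisely what pins down the decomposing measure on the extreme boundary and upgrades mere convexity to the simplex property. Since all three statements are classical, my proposal essentially reconstructs the arguments behind the cited results in \cite{wa}.
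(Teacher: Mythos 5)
The paper gives no proof of this proposition at all---it simply cites \cite[Theorem 6.10]{wa} and \cite[p.153]{wa}---and your reconstruction is exactly the standard argument behind those citations: closedness via the test functions $\phi\circ T\in C_b(X)$, compactness of a closed subset of the weak$^*$ compact set $\M(X)$, the extreme-point characterisation via conditional measures on invariant sets and the $T$-invariance of the Radon--Nikodym derivative, and uniqueness in the ergodic decomposition via mutual singularity of distinct ergodic measures. Your proof is correct and matches the cited approach; the only point worth flagging is that on a non-separable metric space the weak$^*$ topology on $\M(X)$ need not be metrizable, so the closedness argument in (a) should formally be phrased with nets rather than sequences (it goes through verbatim).
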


\subsection{Entropy of a dynamical system}

Let $T:(X,d) \to (X,d)$ be a continuous dynamical system. We recall the definition of entropy of an invariant measure $\mu \in \M_T$ (see \cite[Chapter 4]{wa} for more details).

\begin{definition}
A partition $\mathcal{P}$ of a probability space  $(X, \mathcal{B}, \mu)$ is a countable (finite or infinite) collection of pairwise disjoint subset of $X$ whose union has full measure.
\end{definition}

\begin{definition} The \emph{entropy} of the partition $\mathcal{P}$ is defined by
\begin{equation*}
H_\mu(\mathcal{P}):= - \sum_{P \in \mathcal{P}} \mu(P) \log \mu(P),
\end{equation*}
where $0 \log 0 :=0$. 
\end{definition}
It is possible that $H_\mu(\mathcal{P})=\infty$. Given two partitions $\mathcal{P}$ and $\mathcal{Q}$ of $X$ we define the new partition 
\begin{equation*}
\mathcal{P} \vee \mathcal{Q}:= \left\{P \cap Q : P \in  \mathcal{P} , Q \in  \mathcal{Q} \right\}
\end{equation*}
Let  $\mathcal{P}$ be a partition of $X$ we define the partition $T^{-1}\mathcal{P}:= \left\{ T^{-1}P : P \in \mathcal{P} \right\}$ and for $n \in \N$ we set  
$\mathcal{P}^n:=\bigvee_{i=0}^{n-1} T^{-i}\mathcal{P}$.  Since the measure $\mu$ is $T-$invariant, the sequence $H_{\mu}(\mathcal{P}^n)$ is sub-additive. 
\begin{definition}
The \emph{entropy} of $\mu$ with respect to $\mathcal{P}$ is defined by
\begin{equation*}
h_{\mu}(\mathcal{P}):=  \lim_{n \to\infty} \frac{1}{n} H_{\mu}(\mathcal{P}^n)
\end{equation*}
\end{definition}

\begin{definition}
The \emph{entropy} of $\mu$ is defined by
\begin{equation*}
h_{\mu}(T):= \sup \left\{h_{\mu}(\mathcal{P}) : \mathcal{P} \text{ a partition with } H_{\mu}(\mathcal{P}) < \infty		\right\}.		
\end{equation*}
\end{definition}
If the underlying dynamical system considered is clear we write $h_{\mu}$ instead of $h_{\mu}(T)$.

\subsection{Generators}
We now recall the definition and properties of an important concept in ergodic theory, namely generators.

\begin{definition}
Let $(T, X, \mathcal{B}, \mu)$ be a dynamical system. A \emph{one-sided generating partition} $\mathcal{P}$ of  $(T,X,\mathcal{B}, \mu)$ is a partition such that $\bigcup_{n=1}^{\infty} \mathcal{P}^n$ generates the sigma-algebra  $\mathcal{B}$ up to sets of measure zero. Analogously, a  \emph{two-sided generating partition} $\mathcal{P}$ of  $(T,X,\mathcal{B}, \mu)$ is a partition such that $\bigcup_{n=-\infty}^{\infty} \mathcal{P}^n$ generates the sigma-algebra  $\mathcal{B}$ up to sets of measure zero. \end{definition}

 A classical result by Kolmogorov and Sinai \cite{si} states that entropy can be computed with either one- or two-sided generating partitions (see also \cite[Theorem 4.17 and 4.18]{wa} or \cite[Theorem 4.2.2]{d}).
 
 \begin{theorem}[Kolmogorov-Sinai] 
 Let $(T, X, \mathcal{B}, \mu)$ be a dynamical system  and  $\mathcal{P}$ a one-or a two-sided generating partition, then $h_{\mu}(T)= h(\mu, \mathcal{P})$.
   \end{theorem}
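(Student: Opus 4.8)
The plan is to prove the two inequalities $h_\mu(\mathcal{P}) \le h_\mu(T)$ and $h_\mu(T) \le h_\mu(\mathcal{P})$ separately. The first is immediate from the definition of entropy: since $h_\mu(T)$ is the supremum of $h_\mu(\mathcal{Q})$ over all partitions $\mathcal{Q}$ with $H_\mu(\mathcal{Q})<\infty$, and (after reducing to this case) $\mathcal{P}$ is such a partition, we get $h_\mu(\mathcal{P})\le h_\mu(T)$ for free. All of the content is in the reverse inequality, and for this it clearly suffices to show that
\[
h_\mu(\mathcal{Q}) \le h_\mu(\mathcal{P}) \quad\text{for every partition } \mathcal{Q} \text{ with } H_\mu(\mathcal{Q})<\infty,
\]
and then take the supremum over such $\mathcal{Q}$.

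The key analytic ingredient is the conditional entropy $H_\mu(\mathcal{Q}\mid\mathcal{F})$ of a partition given a sub-$\sigma$-algebra $\mathcal{F}$, together with two standard facts. First, the chain-rule bound $h_\mu(\mathcal{Q}) \le h_\mu(\mathcal{R}) + H_\mu(\mathcal{Q}\mid\mathcal{R})$, valid for any finite-entropy partitions $\mathcal{Q},\mathcal{R}$, which follows from monotonicity of $h_\mu(\cdot)$ under refinement and the subadditivity identity $H_\mu(\mathcal{Q}\vee\mathcal{R})=H_\mu(\mathcal{R})+H_\mu(\mathcal{Q}\mid\mathcal{R})$. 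Second, the invariance $h_\mu(\mathcal{P}_N)=h_\mu(\mathcal{P})$, where $\mathcal{P}_N:=\bigvee_{i=0}^{N}T^{-i}\mathcal{P}$ in the one-sided case and $\mathcal{P}_N:=\bigvee_{i=-N}^{N}T^{-i}\mathcal{P}$ in the two-sided case; the latter uses that $T$ is invertible and that $h_\mu(T^k\mathcal{P})=h_\mu(\mathcal{P})$, so that $\mathcal{P}_N=T^N\bigvee_{i=0}^{2N}T^{-i}\mathcal{P}$ has the same entropy as $\mathcal{P}$. Applying the chain-rule bound with $\mathcal{R}=\mathcal{P}_N$ yields
\[
h_\mu(\mathcal{Q}) \le h_\mu(\mathcal{P}_N) + H_\mu(\mathcal{Q}\mid\mathcal{P}_N) = h_\mu(\mathcal{P}) + H_\mu(\mathcal{Q}\mid\mathcal{P}_N).
\]

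It then remains to let $N\to\infty$. By the generating hypothesis the $\sigma$-algebras $\sigma(\mathcal{P}_N)$ increase, up to $\mu$-null sets, to $\mathcal{B}$; since $\mathcal{Q}$ is $\mathcal{B}$-measurable this forces $H_\mu(\mathcal{Q}\mid\mathcal{P}_N)\to H_\mu(\mathcal{Q}\mid\mathcal{B})=0$, and combining with the displayed inequality gives $h_\mu(\mathcal{Q})\le h_\mu(\mathcal{P})$, completing the argument after the supremum over $\mathcal{Q}$. I expect the convergence $H_\mu(\mathcal{Q}\mid\mathcal{P}_N)\to 0$ to be the main obstacle: it is a continuity (increasing-martingale) statement for conditional entropy, and because $\mathcal{Q}$ may be a countably infinite partition — the typical situation for countable Markov shifts — one cannot simply invoke boundedness of the conditional information functions. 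Instead one controls the family $I_\mu(\mathcal{Q}\mid\mathcal{P}_N)$ by a single integrable dominating function via the maximal inequality of Chung (the conditional refinement of the fact that $H_\mu(\mathcal{Q})<\infty$), after which the increasing martingale theorem provides pointwise a.e.\ convergence of the information functions and dominated convergence passes the limit through the integral. The two-sided versus one-sided distinction enters only through the definition of $\mathcal{P}_N$ and the use of invertibility, so both cases are handled uniformly by this scheme.
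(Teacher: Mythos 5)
The paper does not prove this statement; it is quoted as a classical result with references to Sinai, \cite[Theorems 4.17 and 4.18]{wa} and \cite[Theorem 4.2.2]{d}, and your argument is precisely the standard proof given in those sources: the chain-rule bound $h_\mu(\mathcal{Q})\le h_\mu(\mathcal{P}_N)+H_\mu(\mathcal{Q}\mid\mathcal{P}_N)$, the invariance $h_\mu(\mathcal{P}_N)=h_\mu(\mathcal{P})$, and the martingale convergence $H_\mu(\mathcal{Q}\mid\mathcal{P}_N)\to 0$, with the Chung--Neveu maximal inequality handling countably infinite $\mathcal{Q}$ of finite entropy. The argument is correct; the only implicit hypothesis worth flagging is that $H_\mu(\mathcal{P})<\infty$ (needed for the easy inequality $h_\mu(\mathcal{P})\le h_\mu(T)$), which is part of the usual statement and is automatic for the finite partitions used in this paper.
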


The existence of generating partitions depends on the acting semi-group. Rohlin \cite{ro1, ro2} proved that ergodic (actually aperiodic) invertible systems of finite entropy have countable two-sided generators. This was later improved by Krieger \cite{kr} (see also \cite[Theorem 4.2.3]{d}).

\begin{theorem}[Krieger]
Let $(T, X, \mathcal{B}, \mu)$ be an ergodic  invertible dynamical system with $h_\mu(T) <\infty$ then there exists a finite two-sided generator $\mathcal{P}$ for $\mu$. The cardinality of $\mathcal{P}$ can be chosen to be any integer larger than $e^{h_{\mu}(T)}$.
\end{theorem}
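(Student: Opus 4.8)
The plan is to fix an integer $k$ with $\log k > h_\mu(T)$ and produce a partition $\mathcal{P}=\{P_1,\dots,P_k\}$ whose two-sided refinement recovers the full $\sigma$-algebra $\mathcal{B}$ up to $\mu$-null sets. The natural starting point is Rohlin's theorem, already quoted above: an ergodic (hence, in the non-atomic case, aperiodic) invertible system of finite entropy admits a \emph{countable} two-sided generator $\mathcal{Q}$ with $H_\mu(\mathcal{Q})<\infty$. Thus it suffices to build a finite partition $\mathcal{P}$ with $|\mathcal{P}|=k$ whose bi-infinite $\mathcal{P}$-name determines the $\mathcal{Q}$-name $\mu$-almost everywhere; then $\mathcal{P}$ generates $\mathcal{Q}$, and so generates $\mathcal{B}$.

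First I would set up the counting that forces the cardinality bound. By the Shannon--McMillan--Breiman theorem, for large $n$ the collection of $\mathcal{Q}$-cylinders of length $n$ carrying all but $\epsilon$ of the measure has cardinality at most $e^{n(h_\mu(T)+\epsilon)}$. Since $\log k>h_\mu(T)$, for small $\epsilon$ this is far below $k^{n}$, so there is room to label each \emph{typical} $\mathcal{Q}$-block of length $n$ injectively by a word of length $n$ over the alphabet $\{1,\dots,k\}$ while still reserving a fixed fraction of the symbol budget for delimiters. The strict inequality is precisely the slack needed here.

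The geometric device is Rohlin's tower lemma: for any $n$ and $\delta>0$ there is a base $B$ with $B,TB,\dots,T^{n-1}B$ pairwise disjoint and $\mu\!\left(\bigcup_{j} T^{j}B\right)>1-\delta$. On such a tower I would read the $\mathcal{Q}$-name of the base point across the $n$ levels, replace it by its codeword, and let $\mathcal{P}$ assign to the $j$-th level the $j$-th letter of that codeword, using a reserved \emph{marker} symbol (together with the complement of the tower) to tag the bottoms of the columns. This way, from a bi-infinite $\mathcal{P}$-name one can locate the tower structure, parse the blocks, and decode them back into the $\mathcal{Q}$-name. The output is a partition into $k$ cells that recovers $\mathcal{Q}$ off an error set of measure $O(\delta+\epsilon)$.

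The main obstacle is upgrading this approximate decoding to an exact generator. A single tower only reconstructs $\mathcal{Q}$ off a small set, so I would iterate the construction along $n_m\to\infty$ with $\delta_m,\epsilon_m\to0$ chosen so the errors are summable, and take a limit in the complete metric space of partitions into at most $k$ cells under $\rho(\mathcal{P},\mathcal{P}')=\min_{\sigma}\sum_i \mu\bigl(P_i\triangle P'_{\sigma(i)}\bigr)$. The delicate points are (i) arranging the successive codebooks and markers coherently so that, by a Borel--Cantelli argument, $\mu$-almost every point is eventually parsed correctly by the limit partition, and (ii) checking that being a generator survives this limit. Equivalently one can package the whole argument via Baire category: the set of generating $k$-partitions is a $G_\delta$ in the above space, the tower-coding step shows it is dense exactly because $\log k>h_\mu(T)$, and existence follows from the Baire category theorem. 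Either route makes clear that the cardinality may be taken to be any integer exceeding $e^{h_\mu(T)}$, since that is what lets the codebook and the synchronizing markers fit together.
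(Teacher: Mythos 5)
The paper does not prove this statement: it is quoted as a classical result, with the reader referred to Krieger's original article and to \cite[Theorem 4.2.3]{d}. So there is no in-paper argument to compare against, and your proposal has to be judged on its own.

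What you sketch is the standard route (essentially the one in Downarowicz or Glasner): reduce via Rohlin's countable-generator theorem, count typical $\mathcal{Q}$-names by Shannon--McMillan--Breiman, code them over a $k$-letter alphabet on a Rohlin tower with reserved marker symbols, and iterate. The skeleton is correct, and you correctly identify that the strict inequality $\log k>h_\mu(T)$ is what leaves room for both the codebook and the delimiters. But as written this is an outline rather than a proof: the two steps you flag as ``delicate'' are in fact the entire mathematical content of Krieger's theorem, and you do not execute either. Concretely: (i) the marker scheme must be \emph{self-synchronizing} --- you need codewords guaranteed not to contain the marker pattern, so that the tower bases can be located from the bi-infinite $\mathcal{P}$-name alone; this requires an explicit combinatorial construction, not just a counting estimate. (ii) The iteration is not a soft limit argument: successive partitions $\mathcal{P}_m$ must be built so that each one \emph{refines the information} carried by its predecessors (the new code must determine the old ones on most of the space), otherwise the limit in the metric $\rho$ need not generate even if each $\mathcal{P}_m$ approximately does; generation is not a closed (nor open) condition preserved under $\rho$-limits without this coherence. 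Your alternative Baire-category packaging has the same issue in different clothing: the $G_\delta$ claim is fine, but the density of generating $k$-partitions in $\rho$ is exactly the coding lemma you have not proved. So the approach is the right one, but the proposal has a genuine gap where the theorem's difficulty lives.
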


\begin{remark}
The above result is false for dynamical systems in which the action is given by $\N$. Non-invertible dynamical systems do not, in general, have finite generators. Existence of finite generating partitions depends on the acting semi-group, as seen the result is valid for $\Z$ and false for $\N$. It has been shown that finite generating partitions do exist also when the acting group is amenable \cite{dp, ro}. 
\end{remark}

Note that in Krieger's result the generator depends upon the measure. There are some well known cases in which there exists a finite partition which is a generator  for \emph{every} measure. For example, if $(\Sigma, \sigma)$ is a transitive two-sided sub-shift of finite type on a finite alphabet then the cylinders of length one form a two-sided generating partition for every invariant measure. Hochman  
proved that a uniform version of Krieger result can be obtained (see \cite[Corollary 1.2]{h2}).

\begin{theorem}[Hochman] \label{thm:hoch}
Let $(T, X, \mathcal{B})$ be an invertible  dynamical system with no periodic points and of finite entropy. Then there exists a finite two-sided partition $\mathcal{P}$ that is a generator for every ergodic  invariant measure.
\end{theorem}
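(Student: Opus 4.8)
The plan is to reduce the statement to the construction of a single Borel coding that separates orbits for \emph{all} invariant measures at once, and then to build that coding by a marker-and-tower argument that is insensitive to the choice of measure. First I would reformulate the conclusion symbolically. A finite partition $\mathcal{P}=\{P_1,\dots,P_N\}$ is a generator for an ergodic $\mu$ precisely when the itinerary map $\pi_{\mathcal{P}}\colon X\to\{1,\dots,N\}^{\Z}$, sending $x$ to the sequence $(j_n)_n$ determined by $T^n x\in P_{j_n}$, intertwines $T$ with the shift and is injective modulo a $\mu$-null set. Hence it suffices to produce one finite alphabet $A$ and one Borel equivariant map $\pi\colon X\to A^{\Z}$ that is injective off a Borel set $E\subseteq X$ which is null for every $T$-invariant probability measure; pulling back the time-zero cylinder partition of $A^{\Z}$ then yields the desired $\mathcal{P}$. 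The value of this step is that it eliminates any reference to a fixed measure and recasts the problem as one of Borel dynamics.

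Next I would construct $\pi$ through a tower hierarchy. Since $T$ has no periodic points, the Borel marker lemma for $\Z$-actions supplies a nested sequence of Borel \emph{marker sets} $M_1\supseteq M_2\supseteq\cdots$ whose consecutive return times grow without bound and with $\bigcap_n M_n=\varnothing$; this is the measure-free substitute for the Rohlin lemma used in Krieger's theorem. These markers cut each orbit into successive blocks of controlled but unbounded length, organizing $X$ into a hierarchy of Borel Rohlin towers. On this skeleton I would inductively assign symbols to the points of each tower, refining the code from one level to the next so that longer and longer stretches of each orbit become reconstructible from the symbolic name.

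The alphabet must remain finite, and this is exactly what the finite-entropy hypothesis buys. Writing $h:=\sup_{\mu} h_\mu(T)<\infty$, for every invariant $\mu$ the number of $n$-blocks carrying most of the $\mu$-mass grows no faster than $e^{(h+o(1))n}$; choosing $|A|$ slightly larger than $e^{h}$ — precisely Krieger's bound — leaves enough room to name every block that any measure can see. A counting and packing argument then shows the symbol assignment can be executed within this single fixed alphabet at every level of the hierarchy, simultaneously for all measures. Because the resulting partition is built from Borel marker sets rather than topological cells, it will typically have large topological boundary, which is consistent with the earlier remark that Hochman's partitions cannot be used directly for upper semi-continuity.

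The hard part, and the genuine content beyond Krieger, is the \emph{uniform} injectivity: ensuring that the set $E$ where $\pi$ fails to separate points is null for all invariant measures at once, rather than one measure at a time. I would argue that distinct points are eventually distinguished by the marker hierarchy — two points in a common orbit separate as soon as a marker falls between them, which occurs at some finite level because $\bigcap_n M_n=\varnothing$, while points in distinct orbits separate through their symbolic names. The exceptional set where separation never occurs is a Borel set assembled from the boundaries of the tower construction, and a Borel--Cantelli type estimate, applied uniformly in $\mu$ using only the global entropy bound, shows $\mu(E)=0$ for every invariant $\mu$. Closing this uniform estimate is where the real difficulty lies; the remainder is bookkeeping over the tower hierarchy.
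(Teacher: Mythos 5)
First, a point of reference: the paper does not prove this statement at all --- it is quoted verbatim from Hochman \cite[Corollary 1.2]{h2} as an external input. So there is no internal proof to compare against; what you have written is a blind reconstruction of Hochman's own argument. At the level of strategy your outline is the right one: reformulating ``generator for every ergodic measure'' as injectivity of a single Borel equivariant itinerary map off a set that is null for all invariant measures, using the Borel marker lemma for aperiodic $\Z$-actions as the measure-free replacement for Rokhlin's lemma, organizing the orbit into a hierarchy of towers, and using the finite-entropy hypothesis to keep the alphabet finite with cardinality just above $e^{h}$. This is genuinely the framework of \cite{h1,h2}.

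The gap is exactly where you locate it, and locating it is not the same as closing it. The step ``a Borel--Cantelli type estimate, applied uniformly in $\mu$ using only the global entropy bound, shows $\mu(E)=0$ for every invariant $\mu$'' does not work as stated, because the Shannon--McMillan--Breiman estimate that controls the number of $n$-blocks of significant measure is \emph{not} uniform over the (generally non-compact, non-parametrized) set of ergodic measures: the $o(1)$ in $e^{(h+o(1))n}$ and the value of $n$ past which it is valid both depend on $\mu$, and there is no single sequence of levels at which one can run a union bound simultaneously for all measures. Hochman's actual proof circumvents this by never quantifying over measures at all: the coding is built pointwise, orbit by orbit, using empirical block frequencies along the orbit itself, so that the counting is a purely combinatorial statement about individual names, and the exceptional set is exhibited directly as a Borel set which every ergodic measure must annihilate by the pointwise ergodic theorem applied \emph{after} the construction. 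Since the entire content of the theorem beyond Krieger is this uniformity, and your proposal defers precisely that step, the argument as written is a correct road map but not a proof.
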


\subsection{Upper semi-continuity of the entropy} 
 The following well known result (see for example \cite[Lemma 6.6.7]{d}) relates the existence of finite generating partitions with continuity properties of the entropy map.

\begin{proposition} \label{prop:usc}
Let $(T, X, \mathcal{B})$ be a dynamical system and $\mathcal{P}$ be a finite two- sided generator for every measure in $\mathcal{C}\subset \mathcal{M}_T$. If for every  $\mu \in \mathcal{C}$  we have that $\mu(\partial \mathcal{P})=0$ then the entropy map is upper semi-continuous in $\mathcal{C}$.
\end{proposition}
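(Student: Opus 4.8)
The plan is to compute the entropy of every measure in $\mathcal{C}$ through the single fixed generator $\mathcal{P}$, and to reduce upper semi-continuity to the continuity, at each level $n$, of the finite-partition entropy functional $\nu \mapsto \tfrac1n H_\nu(\mathcal{P}^n)$. First I would invoke the Kolmogorov--Sinai theorem: since $\mathcal{P}$ is a two-sided generator for every $\mu \in \mathcal{C}$, we have $h_\mu(T) = h_\mu(\mathcal{P})$ for all such $\mu$. Because $H_\mu(\mathcal{P}^n)$ is subadditive in $n$, Fekete's lemma gives $h_\mu(\mathcal{P}) = \lim_{n} \tfrac1n H_\mu(\mathcal{P}^n) = \inf_{n} \tfrac1n H_\mu(\mathcal{P}^n)$. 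In particular, for every $n \in \N$ and every $\nu \in \mathcal{C}$ we obtain the one-sided bound
\[
h_\nu(T) \;\le\; \frac1n H_\nu(\mathcal{P}^n),
\]
which is the inequality that drives the whole argument: it controls the entropy of nearby measures by a quantity depending only on the finitely many atoms of $\mathcal{P}^n$.

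Next I would establish that, for each fixed $n$, the map $\nu \mapsto \tfrac1n H_\nu(\mathcal{P}^n)$ is continuous at every $\mu \in \mathcal{C}$. An atom of $\mathcal{P}^n = \bigvee_{i=0}^{n-1} T^{-i}\mathcal{P}$ is a finite intersection $Q = \bigcap_{i=0}^{n-1} T^{-i} P_{j_i}$, and using continuity of $T$ in the form $\partial(T^{-1}A) \subseteq T^{-1}(\partial A)$ together with $\partial(A \cap B) \subseteq \partial A \cup \partial B$ one gets $\partial Q \subseteq \bigcup_{i=0}^{n-1} T^{-i}(\partial \mathcal{P})$. Since $\mu$ is $T$-invariant and $\mu(\partial\mathcal{P}) = 0$ by hypothesis, it follows that $\mu(\partial Q) = 0$ for every atom $Q \in \mathcal{P}^n$. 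Hence, by the last item of the Portmanteau Theorem \ref{thm:port}, $\nu(Q) \to \mu(Q)$ as $\nu \to \mu$ for each such $Q$; as $\mathcal{P}^n$ is finite and $t \mapsto -t\log t$ is continuous on $[0,1]$, the functional $\nu \mapsto \tfrac1n H_\nu(\mathcal{P}^n)$ is indeed continuous at $\mu$.

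Finally I would combine the two steps. Given $\mu \in \mathcal{C}$ and $\epsilon > 0$, use the infimum characterisation to pick $n$ with $\tfrac1n H_\mu(\mathcal{P}^n) < h_\mu(T) + \epsilon$; by the continuity just proved there is a weak$^*$ neighbourhood $U$ of $\mu$ on which $\tfrac1n H_\nu(\mathcal{P}^n) < h_\mu(T) + \epsilon$, and then for every $\nu \in U \cap \mathcal{C}$ the one-sided bound yields $h_\nu(T) \le \tfrac1n H_\nu(\mathcal{P}^n) < h_\mu(T) + \epsilon$, which is upper semi-continuity on $\mathcal{C}$. I expect the main obstacle to be the boundary estimate of the second step: the null-boundary hypothesis is imposed only on $\mathcal{P}$, yet it must be propagated to every refinement $\mathcal{P}^n$, whose atoms proliferate with $n$, and this propagation is exactly what $T$-invariance provides. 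The interchange of the two limiting operations --- first $\nu \to \mu$ at fixed $n$, then $n \to \infty$ --- is likewise essential and rests on having the inequality $h_\nu(T) \le \tfrac1n H_\nu(\mathcal{P}^n)$ available uniformly in $n$.
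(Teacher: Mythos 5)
Your argument is correct and follows essentially the same route as the paper: compute entropy through the fixed generator via Kolmogorov--Sinai, show $\nu\mapsto H_\nu(\mathcal{P}^n)$ is continuous at $\mu$ using the Portmanteau theorem, and conclude upper semi-continuity from the infimum characterisation $h_\nu(\mathcal{P})=\inf_n\tfrac1n H_\nu(\mathcal{P}^n)$. The only difference is that you spell out the boundary propagation $\partial Q\subseteq\bigcup_{i=0}^{n-1}T^{-i}(\partial\mathcal{P})$ and the use of $T$-invariance, a step the paper asserts without proof.
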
	
	
\begin{proof}
Let $\mu \in \mathcal{C}$  and $P \in \mathcal{P}$. Note that since  $\mu(\partial \mathcal{P})=0$ the function $\nu \to \nu(P)$ is continuous at $\mu$.  Since the partition $\mathcal{P}$ is finite, the function defined in $\mathcal{C}$ by
\begin{equation*}
\nu \to H_{\nu}(\mathcal{P})= - \sum_{P \in \mathcal{P}} \nu(P) \log \nu(P),
\end{equation*}
is continuous at $\mu$. Note that  $\mu(\partial \mathcal{P})=0$ implies that $\mu(\partial \mathcal{P}^n)=0$. Thus, the function in $\mathcal{C}$ defined by
$\nu \to H_{\nu}(\mathcal{P}^n)$ is also continuous. Since the function defined in $\mathcal{C}$ by
\begin{equation*}
\nu \mapsto h_{\nu}(\mathcal{P})= \inf_n \frac{1}{n} H_{\nu}(\mathcal{P}^n)
\end{equation*}
is the infimum of continuous functions at $\mu$ we have that the map $\nu \mapsto h_{\nu}(\mathcal{P})$ is upper semi-continuous at $\mu$. Since $\mathcal{P}$ is a uniform generating partition in $\mathcal{C}$ for every  $\nu \in \mathcal{C}$ we have $h_{\nu}(T)=h_{\nu}(\mathcal{P})$. Thus, the map
\begin{equation*}
\nu \mapsto h_{\nu}(T)
\end{equation*}
is upper semi-continuous at $\mu$. Since $\mu \in \mathcal{C}$ was arbitrary the result follows. 
\end{proof}

\begin{remark}
Note that if in Proposition \ref{prop:usc} we have $\mathcal{C}=\mathcal{M}_T$ then the entropy map is upper semi-continuous in the space of invariant probability measures.
\end{remark}

\begin{remark}
The argument in Proposition \ref{prop:usc} breaks down if we consider countable (infinite) generating partitions since, in that case, the map $\nu \to H_{\nu}(\mathcal{P})$ need not to be continuous (see Remark \ref{rem:infentropy} or \cite[p.774]{jmu}). 
\end{remark}

\begin{remark} Expanding maps defined on compact metric spaces are examples of dynamical systems having finite generators as in Proposition \ref{prop:usc} (see \cite[Theorem 8.2]{wa}). We stress that while  Theorem~\ref{thm:hoch} provides a finite (uniform) generating partition, in general this does not satisfy the condition of zero measure boundary.
\end{remark}

\section{Countable Markov shifts}  \label{sec:cms}
In this section we define countable Markov shifts, both one-and two-sided and prove our main results.

\subsection{Two-sided countable Markov shifts}
Let $(\Sigma, \sigma)$ be a transitive two-sided Markov shift defined over a countable alphabet $\mathcal{A}$. This means that there exists a matrix $S=(s_{ij})_{\mathcal{A} \times \mathcal{A} }$ of zeros and ones such that
\begin{equation*}
\Sigma=\left\{ x \in \mathcal{A} ^{\mathbb{Z}} : s_{x_{i} x_{i+1}}=1 \ \text{for every $i \in \mathbb{Z}$}\right\}.
\end{equation*}
Note that the matrix $S$ induces a directed graph on $\mathcal{A}$. Let $n \in \N$ and ${\bf{r}}:=(r_1, \dots, r_n)\in \mathcal{A}^n$, we say that ${\bf{r}}$ is an \emph{admissible word} if $s_{r_{i} r_{i+1}}=1,$ for $i \in \{1, \dots, n-1\}$. In this setting transitivity means that given $x,y\in \mathcal{A}$, there exists an admissible word starting at $x$ and ending at $y$.  Let $(r_1, \dots, r_n)$ be an admissible word and $l\in \Z$, we define the corresponding cylinder set  by
\begin{equation*}
[r_1,...,r_n]_l := \left\{x \in \Sigma: x_l=r_1, x_{l+1}=r_2,...,x_{l+n-1}=r_n  \right\}. 
\end{equation*}
We endow $\Sigma$ with the topology generated by the cylinder sets. Note that, with respect to this topology, the space $\Sigma$ is  non-compact.   The \emph{shift map} $\sigma:\Sigma \to \Sigma$ is defined by $(\sigma(x))_i=x_{i+1}$. Let $\mathcal{M}_{\sigma}$ be the set of $\sigma-$invariant probability measures and  $\mathcal{E}_{\sigma} \subset \mathcal{M}_{\sigma}$ the set of ergodic invariant probability measures. The topological entropy of $\sigma$ is defined by
\begin{equation*}
h_{top}(\sigma):= \lim_{n \to \infty} \frac{1}{n} \log \sum_{\sigma^nx =x} \chi_{[a]_0}(x) = \sup \left\{h_{\mu}(\sigma) : \mu \in \mathcal{M}_{\sigma}		\right\}= \sup \left\{h_{\mu}(\sigma) : \mu \in \mathcal{E}_{\sigma}		\right\},
\end{equation*}
where $a \in \mathcal{A}$ is an arbitrary symbol and $\chi_{[a]_0}$ is the characteristic function of the cylinder $[a]_0$. This definition was introduced by Gurevich \cite{gu1, gu2} who  proved that the limit exists (see also \cite[Remark 3.2]{ds}),  and
 also proved the second  and third equalities. Since the system is transitive the definition does not depend on the symbol $a$. If the system $(\Sigma, \sigma)$ is not transitive and $h_{var}:=\sup \left\{h_{\mu}(\sigma) : \mu \in \mathcal{E}_{\sigma}		\right\} < \infty$ then the topological entropy of any transitive component is bounded above by $h_{var}$ and we define $h_{top}$ as $h_{var}$.

\subsection{Proof of Theorem \ref{thm:usc} when $(\Sigma,\sigma)$ is transitive} We now assume that $(\Sigma,\sigma)$ is a transitive countable Markov shift. Given $a\in \mathcal{A}$ we define
\begin{equation*}
\Sigma_a:=\{x\in\Sigma: \sigma^k x\in [a] \text{ for infinitely many positive and negative } k \in \mathbb{Z}\}.
\end{equation*}
Observe that $\Sigma_a$ is a Borel $\sigma$-invariant subset of $\Sigma$. Therefore, the dynamical system $\sigma : \Sigma_a \to \Sigma_a$ is well defined. Since $(\Sigma, \sigma)$ is a finite entropy system, so is $(\Sigma_a, \sigma)$. 

\begin{remark} 
Let $\mu \in \mathcal{E}_{\sigma}$ be an ergodic $\sigma$-invariant probability measure  such that  $\mu([a])>0$, then by the Birkhoff ergodic theorem we have $\mu(\Sigma_a)=1$.  This is the only point in our proof where we use ergodicity of our measures of interest. Moreover, since the system is transitive the set $\Sigma_a$ is dense in $\Sigma$.
\end{remark}
The following class of countable Markov shifts that has been studied in \cite{bbg,ru,sa} will be of importance in what follows.

\begin{definition}A \emph{loop graph} is a graph made of simple loops which are based at a common vertex and otherwise do not intersect. A \emph{loop system} is the two-sided countable Markov shift defined by a loop graph.  
\end{definition}

\begin{lemma}\label{topconj} The system  $(\Sigma_a,\sigma)$ is topologically conjugate to a loop system $(\overline\Sigma, \sigma)$ of finite entropy.
\end{lemma}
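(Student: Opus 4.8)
The plan is to realize each point of $\Sigma_a$ through its first-return decomposition to the symbol $a$ and to read off from this decomposition a bi-infinite path in an explicitly built loop graph. First I would construct the graph. Let $\mathcal{R}$ be the (countable) set of first-return words to $a$, i.e.\ admissible words $w=(a=w_0,w_1,\dots,w_{m-1},w_m=a)$ with $w_i\neq a$ for $0<i<m$. The loop graph $G$ has a single base vertex $b$, and for each $w\in\mathcal{R}$ of length $m\ge 2$ a simple loop $b\to v_1^w\to\cdots\to v_{m-1}^w\to b$ built from \emph{fresh} internal vertices $v_1^w,\dots,v_{m-1}^w$; a return word of length $1$ (the transition $a\to a$, if allowed) contributes a self-loop at $b$. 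Distinct return words then share only the vertex $b$, so $G$ is genuinely a loop graph, and I let $(\overline\Sigma,\sigma)$ be the associated two-sided countable Markov shift.

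Next I would define the candidate conjugacy $\pi:\Sigma_a\to\overline\Sigma$. For $x\in\Sigma_a$ the set $\{k:x_k=a\}$ is bi-infinite, so $x$ is, in a unique way, a bi-infinite concatenation of return words overlapping at the symbols $a$. If position $k$ lies in the return word $w$ at internal offset $j=k-t_i\in\{1,\dots,m-1\}$ (where $t_i$ is the return just before $k$), set $\pi(x)_k=v_j^w$, and set $\pi(x)_k=b$ when $x_k=a$. Admissibility of $\pi(x)$ holds edge by edge inside each loop and at the junctions $b\to v_1^{w'}$, $v_{m-1}^w\to b$, so $\pi(x)\in\overline\Sigma$. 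Equivariance $\pi\circ\sigma=\sigma\circ\pi$ is immediate, and the inverse is the one-block map sending $b\mapsto a$ and $v_j^w\mapsto w_j$ (well defined because $v_j^w$ determines both $w$ and $j$), which is trivially continuous. For surjectivity I would observe that in a loop graph every internal vertex has a unique successor and predecessor and lies on a finite loop; hence every point of $\overline\Sigma$ visits $b$ infinitely often in both directions, and reading off its loops as return words and concatenating them yields a preimage in $\Sigma_a$.

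The step I expect to be the main obstacle is the continuity of $\pi$ itself, because computing $\pi(x)_k$ requires locating the occurrences of $a$ bracketing position $k$, and these can be arbitrarily far from $k$; thus $\pi$ is \emph{not} a finite-block code and continuity cannot be argued uniformly. The point is that continuity need only hold pointwise: fixing $x\in\Sigma_a$ and a window $[-M,M]$, the very definition of $\Sigma_a$ guarantees that the nearest return $\tau^-(-M)\le -M$ and the nearest return $\tau^+(M)>M$ are both finite. Any $x'$ agreeing with $x$ on $[\tau^-(-M),\tau^+(M)]$ then has exactly the same occurrences of $a$ there, hence the same bracketing loops over $[-M,M]$, so $\pi(x')$ agrees with $\pi(x)$ on $[-M,M]$. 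This is precisely the place where the hypothesis $x\in\Sigma_a$ (rather than merely $x\in\Sigma$) is used, and the recurrence in both time directions is what makes $\pi$ a homeomorphism onto $\overline\Sigma$.

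Finally, since a topological conjugacy induces an entropy-preserving bijection between invariant probability measures, the Gurevich entropies satisfy $h_{top}(\overline\Sigma)=h_{top}(\Sigma_a)$; as $(\Sigma_a,\sigma)$ is a subsystem of the finite-entropy system $(\Sigma,\sigma)$ it has finite entropy, whence $(\overline\Sigma,\sigma)$ does as well, completing the proof.
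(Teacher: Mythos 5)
Your proposal is correct and follows essentially the same route as the paper: both build a loop graph with one loop per first-return word to $a$, define the conjugacy by recoding the bi-infinite return-word decomposition of each point of $\Sigma_a$, verify continuity pointwise using that returns to $a$ bracket any finite window (equivalently, that cylinders of the form $[a\mathbf{x}_1a\cdots a\mathbf{x}_ma]_h$ form a neighbourhood basis and pull back to cylinders), and deduce finite entropy of $\overline\Sigma$ from that of $\Sigma_a\subset\Sigma$. The only cosmetic difference is that the paper groups the return words by length and notes that finite entropy forces finitely many of each length, a point your argument does not need.
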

\begin{proof} For every $n \in \N$ denote by $C_n$ the set of non-empty cylinders in $\Sigma$ of the form $[ax_1...x_na]_0$, where $x_i\ne a$ for all $i \in \{1, \dots, n   \}$. Since the entropy of $\Sigma$ is finite, the number of elements in $C_n$ is finite. Let $c_n$ be the number of elements in $C_n$ and write $C_n=\{A_n^1,...,A_n^{c_n}\}$. Construct a loop graph with exactly $c_n$ loops of length $n+1$, and denote by $\overline{\Sigma}$ the loop system associated to it.
 For convenience we denote the vertex of the loop graph with the letter $a$. 
There is a one  to one correspondence between $C_n$ and the non-empty cylinders in $\overline{\Sigma}$ of the form $[a{\bf x}a]$, where the word ${\bf x}$ does not contain the letter $a$ and it has length $n$. Denote by $B_n^i$ to the cylinder in $\overline{\Sigma}$ associated to $A_n^i$. 

Note that every $x \in \Sigma_a$ is of the following form $(\dots a{\bf x_{-1}}a{\bf x_0}a{\bf x_1}a \dots)$, where  ${\bf x_i}$ are admissible words that do not contain the letter $a$. Observe that for each ${\bf x_i}$ there is a unique corresponding cylinder $A_{n_i}^i$. Moreover, for each $A_{n_i}^i$ there is a unique  corresponding cylinder $B_{n_i}^i$ in $\overline{\Sigma}$. Finally, fo each $B_{n_i}^i$ 
there  is a unique corresponding admissible word $\bf{x}^b_i$ in $\overline{\Sigma}$. Following this procedure we can define a bijective map
$F:\Sigma_a \to \overline{\Sigma}$ by $F(\dots a{\bf x_{-1}}a{\bf x_0}a{\bf x_1}a \dots)=(\dots a{\bf x^b_{-1}}a{\bf x^b_0}a{\bf x^b_1}a \dots).$

We will now prove that $F$ is a homeomorphism. Let $U$ be an open set in $\overline{\Sigma}$. Consider a point $x\in U$ and define $y=F^{-1}(x)$. To prove the continuity of $F$ it is enough to check that $y$ is an interior point of $F^{-1}(U)$.  Let $[a{\bf x^b_{1}}a{\bf x^b_2}a...a{\bf x^b_m}a]_h$, where $h\in\Z$ and no ${\bf x_i}$ contains the letter $a$, be a cylinder contained in $U$ such that $x \in U$. Note that
\begin{equation*}
F^{-1}([a{\bf x^b_{1}}a{\bf x^b_2}a...a{\bf x^b_m}a]_h)=[a{\bf x_{1}}a{\bf x_2}a...a{\bf x_m}a]_h
\end{equation*}
Note that $y \in [a{\bf x_{1}}a{\bf x_2}a...a{\bf x_m}a]_h$,  $[a{\bf x_{1}}a{\bf x_2}a...a{\bf x_m}a]_h \subset F^{-1}U$ and the cylinder  set
$[a{\bf x_{1}}a{\bf x_2}a...a{\bf x_m}a]_h$ is open. Therefore, $F$ is continuous. A similar argument gives that $F^{-1}$ is also continuous, therefore $F$ is a homeomorphism. By construction we have that $\sigma|_{\overline{\Sigma} \circ F} = F \circ \sigma |_{\Sigma_a}$. Since $\Sigma_a$ has finite entropy so does $\overline{\Sigma}$.
\end{proof}

The following result was obtained by Boyle, Buzzi and G\'omez \cite[Lemma 3.7]{bbg}, they established the existence of a \emph{continuous} embedding  of a loop system into a compact sub-shift. Let us stress that the relevant part of the result is the continuity. 
Borel embeddings have been obtained in greater generality (see Hochman \cite[Corollary 1.2]{h2} or \cite[Theorem 1.5]{h1}).

\begin{theorem}[Boyle, Buzzi, G\'omez] \label{topemb} A loop system of finite topological entropy can be continuously embedded in an invertible  compact topological Markov shift. 
\end{theorem}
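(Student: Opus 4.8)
The plan is to build the embedding by hand as a shift-equivariant recoding that replaces each traversed loop by a codeword of the same length over a fixed finite alphabet, using the finite entropy hypothesis only to guarantee that finitely many code symbols suffice. Write the loop graph as a collection of simple loops based at the common vertex $a$, and for $\tau \in \N$ let $c_\tau$ denote the number of loops of first-return time $\tau$. Because the loop system has finite topological entropy $h$, each $c_\tau$ is finite and $\limsup_\tau \frac1\tau \log c_\tau \le h < \infty$; hence there is an integer $d$ with $c_\tau \le d^{\tau-1}$ for all $\tau \ge 2$. Fixing such a $d$, I would inject, for each $\tau\ge 2$, the set of return-time-$\tau$ loops into the words $\{1,\dots,d\}^{\tau-1}$, assigning to each loop $\ell$ a label $w(\ell)$; the finitely many return-time-one loops (self-loops at $a$) are handled separately by enlarging the marker alphabet below.

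The target will be the full shift $(\Omega,\sigma)$ on the finite alphabet $\{*_1,\dots,*_{m}\}\cup\{1,\dots,d\}$, where the symbols $*_k$ serve as loop-boundary markers and $m$ is chosen at least as large as the number of self-loops; being a full shift on finitely many symbols, $\Omega$ is a compact invertible topological Markov shift. Every $x\in\overline\Sigma$ visits $a$ infinitely often in both directions (each non-$a$ vertex lies on a unique finite loop and has in- and out-degree one within it, so the orbit is forced back to $a$), so $x$ decomposes uniquely as a bi-infinite concatenation of loops. I then define $\Phi\colon\overline\Sigma\to\Omega$ by placing a marker at every coordinate where $x$ reads $a$ and writing the label $w(\ell)$ across the interior coordinates of each traversed loop $\ell$ (a self-loop being coded by a single marker of the appropriate type). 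Since labels occupy exactly the interior of their loop, $\Phi$ preserves coordinates and therefore satisfies $\Phi\circ\sigma=\sigma\circ\Phi$. Injectivity is immediate: the markers of $\Phi(x)$ recover the loop decomposition, and each block recovers its loop because $w$ was chosen injective on each return-time class.

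The main work, and the point flagged as the relevant part of the statement, is continuity of $\Phi$ and of $\Phi^{-1}$ on the image; note that $\Phi$ is \emph{not} a finite sliding-block code, because the loop covering a given coordinate may be arbitrarily long, so uniform continuity fails and one must argue pointwise. To check continuity at $x$, fix $M$ and observe that the coordinates of $\Phi(x)$ in $[-M,M]$ are determined by the finitely many loops of $x$ meeting $[-M,M]$, which occupy some finite window $[-N,N]$; any $y$ agreeing with $x$ on $[-N,N]$ has the same markers and the same loops over $[-M,M]$, whence $\Phi(y)$ agrees with $\Phi(x)$ there. The symmetric argument, recovering $x|_{[-M,M]}$ from a large enough central block of $\Phi(x)$, gives continuity of $\Phi^{-1}$, so $\Phi$ is a homeomorphism onto its image and the desired embedding. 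The only genuinely delicate input is the counting bound $c_\tau\le d^{\tau-1}$, which is exactly where finiteness of the entropy enters; everything else is bookkeeping about parsing the marker-delimited loop decomposition.
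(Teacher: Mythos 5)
Your argument is correct, but note that the paper does not actually prove this statement: it is quoted verbatim from Boyle--Buzzi--G\'omez \cite[Lemma 3.7]{bbg}, so there is no in-paper proof to compare against. What you have written is essentially a self-contained reconstruction of the standard marker-and-label coding that underlies their lemma: the finite-entropy hypothesis forces each first-return count $c_\tau$ to be finite with $\limsup_\tau \tfrac1\tau\log c_\tau\le h$ (each return-time-$\tau$ loop already contributes a periodic point of period $\tau$ through $a$, or alternatively the loops of a single length $\tau$ generate a subsystem of entropy $\tfrac{\log c_\tau}{\tau}\le h$, giving the cleaner uniform bound $c_\tau\le e^{h\tau}$ and hence $d=\lceil e^{2h}\rceil$), and the rest is the parsing of the unique bi-infinite loop decomposition. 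The two points that genuinely needed care are both handled: you correctly observe that the recoding is not a finite-window sliding block code, so continuity of $\Phi$ and of $\Phi^{-1}$ must be argued pointwise via the window $[i_0,i_1]$ spanned by the loops meeting $[-M,M]$; and you correctly note that the target being a \emph{full} shift on finitely many symbols is enough, since the statement only asks for a continuous equivariant embedding into a compact invertible topological Markov shift, not that the image be closed (it cannot be, as $\overline\Sigma$ is non-compact). The only cosmetic quibbles: you should say explicitly that finiteness of each individual $c_\tau$ is itself a consequence of finite entropy (otherwise the choice of $d$ for small $\tau$ is unjustified), and the marker alphabet needs one symbol more than the number of self-loops so that the generic loop-start marker is not confused with a self-loop marker; neither affects the validity of the construction.
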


\begin{lemma} \label{lemdisj} Let $A$ and $B$ be disjoint open subsets of $\Sigma_a$. Suppose $A_1$  and $B_1$ are open subsets of $\Sigma$ such that $A_1\cap \Sigma_a=A$ and $B_1\cap\Sigma_a=B$. Then $A_1$ and $B_1$ are disjoint. 
\end{lemma}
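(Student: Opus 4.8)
The plan is to argue by contradiction, the whole weight of the lemma resting on the density of $\Sigma_a$ in $\Sigma$ recorded in the remark immediately following the definition of $\Sigma_a$ (which holds because $(\Sigma,\sigma)$ is transitive). First I would suppose, aiming for a contradiction, that $A_1 \cap B_1 \neq \emptyset$. Since $A_1$ and $B_1$ are open in $\Sigma$, the set $A_1 \cap B_1$ is then a non-empty open subset of $\Sigma$.

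The key step is to invoke density. A dense set meets every non-empty open subset of the ambient space, so since $\Sigma_a$ is dense in $\Sigma$ there exists a point
\begin{equation*}
x \in (A_1 \cap B_1) \cap \Sigma_a .
\end{equation*}
It then remains only to unwind the defining hypotheses on $A_1$ and $B_1$. From $x \in A_1$ together with $x \in \Sigma_a$ we obtain $x \in A_1 \cap \Sigma_a = A$, and in the same way $x \in B_1 \cap \Sigma_a = B$. Hence $x \in A \cap B$, contradicting the assumption that $A$ and $B$ are disjoint. Therefore $A_1 \cap B_1 = \emptyset$, as claimed.

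There is no real obstacle here beyond applying density correctly; the statement is essentially a formal consequence of $\Sigma_a$ being dense. The one point that deserves a moment's care is the interplay between the subspace topology on $\Sigma_a$ and the topology on $\Sigma$: the hypotheses guarantee $A = A_1 \cap \Sigma_a$ and $B = B_1 \cap \Sigma_a$ precisely so that a witness $x$ produced in the \emph{ambient} open set $A_1 \cap B_1$, once it is seen to lie in $\Sigma_a$, automatically lands in both $A$ and $B$. It is exactly this compatibility that makes the three-line argument close, and it is worth remarking that the lemma would fail without the density supplied by transitivity.
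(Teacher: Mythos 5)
Your proof is correct and follows essentially the same route as the paper's: both argue by contradiction and use the density of $\Sigma_a$ in $\Sigma$ to produce a point of $\Sigma_a$ inside the non-empty open set $A_1\cap B_1$, which then lies in $A\cap B$. The only cosmetic difference is that the paper first names an open set $U\subset A_1\cap B_1$ before intersecting with $\Sigma_a$, whereas you work with $A_1\cap B_1$ directly.
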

\begin{proof}
Suppose that $A_1$ and $B_1$ are not disjoint. In this case we can find a non-empty open set $U\subset A_1\cap B_1$. Define $V=U\cap \Sigma_a$ and observe that $V\subset A\cap B$. Since $\Sigma_a$ is dense in $\Sigma$ we have that $V=U\cap\Sigma_a$ is non-empty, which contradicts that $A$ and $B$ are disjoint. 
\end{proof}

Given an open subset $A$ of $\Sigma_a$ we define $\widehat{A}$ as the largest open subset in $\Sigma$ such that $\widehat{A}\cap\Sigma_a=A$. Similarly, for a closed subset $B$ of $\Sigma_a$ we define $\widecheck{B}$ as the smallest closed subset of $\Sigma$ such that $\widecheck{B}\cap \Sigma_a=B$. The existence of both $\widehat{A}$ and $\widecheck{B}$ follows from Zorn's Lemma. 

\begin{remark} \label{remequa} If $B \subset \Sigma_a$ is closed set, then there exists a closed set $B_1 \subset \Sigma$ such that $B_1\cap \Sigma_a=B$. Since $B\subset B_1$, we conclude that
$\overline{B}\subset B_1$, where $\overline{B}$ is the closure of $B$ in $\Sigma$. This implies that $B\subset \overline{B}\cap \Sigma_a\subset B_1\cap \Sigma_a=B$, and therefore $\overline{B}\cap \Sigma_a=B$. Moreover   $\widecheck{B}=\overline{B}$.
\end{remark}

\begin{lemma}\label{lemincl} Let $A$ be an open and closed subset of $\Sigma_a$. Then $\widehat{A}\subset \widecheck{A}$, in particular $\overline{\widehat{A}}=\overline{A}$. 
\end{lemma}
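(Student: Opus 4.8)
The plan is to leverage the density of $\Sigma_a$ in $\Sigma$ together with the two defining properties of $\widehat{A}$ and $\widecheck{A}$, separating the roles played by the openness and the closedness of $A$. First I would reduce the target inclusion to a statement about $\overline{A}$. Since $A$ is closed in $\Sigma_a$, Remark \ref{remequa} gives $\widecheck{A}=\overline{A}$, where $\overline{A}$ denotes the closure of $A$ in $\Sigma$. Thus it suffices to prove that $\widehat{A}\subset\overline{A}$, and here only the openness of $A$ will be used, through the fact that $\widehat{A}$ is an open subset of $\Sigma$ satisfying $\widehat{A}\cap\Sigma_a=A$.

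For the inclusion $\widehat{A}\subset\overline{A}$ I would argue pointwise using density. Fix $x\in\widehat{A}$ and let $V$ be any open neighbourhood of $x$ in $\Sigma$. Then $V\cap\widehat{A}$ is a non-empty open subset of $\Sigma$, as it contains $x$. Because $\Sigma_a$ is dense in $\Sigma$, the set $V\cap\widehat{A}\cap\Sigma_a$ is non-empty; using $\widehat{A}\cap\Sigma_a=A$, this says precisely that $V\cap A\neq\emptyset$. Since $V$ was an arbitrary neighbourhood of $x$, we conclude $x\in\overline{A}$, and hence $\widehat{A}\subset\overline{A}=\widecheck{A}$.

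Finally I would deduce the equality $\overline{\widehat{A}}=\overline{A}$ from the inclusion just proved. Taking closures in $\widehat{A}\subset\overline{A}$ yields $\overline{\widehat{A}}\subset\overline{A}$. For the reverse inclusion, note that $A=\widehat{A}\cap\Sigma_a\subset\widehat{A}$, so $\overline{A}\subset\overline{\widehat{A}}$; combining the two containments gives $\overline{\widehat{A}}=\overline{A}$.

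I do not anticipate a serious obstacle: the whole argument rests on the elementary topological fact that an open set is contained in the closure of its trace on a dense subset. The only point requiring a little care is to use the openness of $\widehat{A}$ itself (and not merely the identity $\widehat{A}\cap\Sigma_a=A$) when intersecting with the neighbourhood $V$, and to keep track that the closedness of $A$ is needed solely to invoke Remark \ref{remequa} for the identification $\widecheck{A}=\overline{A}$.
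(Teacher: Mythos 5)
Your proof is correct, but it takes a genuinely different route from the paper's. The paper argues by duality with the complement: setting $B:=\Sigma_a\setminus A$, it observes that $\Sigma\setminus\widecheck{A}$ is an open set whose trace on $\Sigma_a$ is $B$, so by maximality $\Sigma\setminus\widecheck{A}\subset\widehat{B}$, i.e.\ $\Sigma\setminus\widehat{B}\subset\widecheck{A}$; it then invokes Lemma~\ref{lemdisj} to get $\widehat{A}\cap\widehat{B}=\emptyset$, whence $\widehat{A}\subset\Sigma\setminus\widehat{B}\subset\widecheck{A}$. You instead identify $\widecheck{A}=\overline{A}$ via Remark~\ref{remequa} and reduce everything to the elementary topological fact that an open subset of $\Sigma$ is contained in the closure of its trace on the dense set $\Sigma_a$ --- which is essentially the same density argument that underlies Lemma~\ref{lemdisj}, applied directly rather than through the disjointness statement. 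Your version is more self-contained and cleanly separates the roles of openness and closedness of $A$; it also has the merit of explicitly deriving the ``in particular'' clause $\overline{\widehat{A}}=\overline{A}$, which the paper's proof leaves to the reader (it follows from $A\subset\widehat{A}\subset\widecheck{A}=\overline{A}$ by taking closures, exactly as you do). What the paper's formulation buys is the exhibition of the duality $\Sigma\setminus\widehat{B}\subset\widecheck{A}$ between the operations $\widehat{\cdot}$ and $\widecheck{\cdot}$ under complementation, but this is not needed elsewhere, so nothing is lost by your more direct argument.
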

\begin{proof} Let $B:=\Sigma_a\setminus A$. Observe that  $\Sigma\setminus \widecheck{A}$ is open and that $(\Sigma\setminus \widecheck{A})\cap \Sigma_a=B$. By the definition of $\widehat{B}$ it follows that $(\Sigma\setminus \widecheck{A})\subset \widehat{B}$, or equivalently that $\Sigma\setminus \widehat{B}\subset \widecheck{A}$. Observe that $A$ and $B$ are disjoint open subsets of $\Sigma_
a$, therefore we can use Lemma \ref{lemdisj} and obtain that $\widehat{A}\subset (\Sigma\setminus \widehat{B})$. All this together implies that $\widehat{A}\subset \widecheck{A}$. 
\end{proof}

\begin{lemma} \label{lempartition} Suppose that $\mathcal{R}=\{R_1,...,R_N\}$ is a partition of $\Sigma_a$ such that
the sets $R_i$, with $i \in \{1, \dots, N\}$,  are open and closed in the topology of $\Sigma_a$. Then there exists a finite partition $\widehat{\mathcal{R}}$ of $\Sigma$ which induces the partition $\mathcal{R}$ on $\Sigma_a$ and $\mu(\partial \widehat{\mathcal{R}})=0$ for every probability measure on $\Sigma$ such that $\mu(\Sigma_a)=1$. 
\end{lemma}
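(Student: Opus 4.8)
The plan is to extend each clopen set $R_i$ to the open set $\widehat{R_i}\subset\Sigma$ furnished by the constructions preceding the lemma, to collect these into an almost-partition, and then to absorb the (null) leftover into one piece. First I would form $\widehat{R_1},\dots,\widehat{R_N}$, each an open subset of $\Sigma$ with $\widehat{R_i}\cap\Sigma_a=R_i$ (legitimate since each $R_i$, being clopen, is in particular open). As the $R_i$ are pairwise disjoint open subsets of $\Sigma_a$, Lemma \ref{lemdisj} shows that the sets $\widehat{R_i}$ are pairwise disjoint in $\Sigma$. Moreover $\bigcup_i(\widehat{R_i}\cap\Sigma_a)=\bigcup_i R_i=\Sigma_a$, so the leftover set $L:=\Sigma\setminus\bigcup_i\widehat{R_i}$ satisfies $L\cap\Sigma_a=\emptyset$, i.e.\ $L\subset\Sigma\setminus\Sigma_a$.

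Next I would define the partition by absorbing $L$ into the first piece: set $\widehat{R}_1':=\widehat{R_1}\cup L$ and $\widehat{R}_i':=\widehat{R_i}$ for $i\geq 2$, and put $\widehat{\mathcal{R}}:=\{\widehat{R}_1',\dots,\widehat{R}_N'\}$. These sets are pairwise disjoint and cover $\Sigma$, so $\widehat{\mathcal{R}}$ is a finite partition of $\Sigma$; and since $L\cap\Sigma_a=\emptyset$ we have $\widehat{R}_i'\cap\Sigma_a=R_i$ for every $i$, so $\widehat{\mathcal{R}}$ induces $\mathcal{R}$ on $\Sigma_a$.

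The crux is to control the boundaries. For $i\geq 2$ the set $\widehat{R_i}$ is open, so $\partial\widehat{R_i}=\overline{\widehat{R_i}}\setminus\widehat{R_i}$; by Lemma \ref{lemincl} one has $\overline{\widehat{R_i}}=\overline{R_i}=\widecheck{R_i}$, and by Remark \ref{remequa} $\overline{R_i}\cap\Sigma_a=R_i$. Hence $\partial\widehat{R_i}\cap\Sigma_a=(\overline{R_i}\cap\Sigma_a)\setminus(\widehat{R_i}\cap\Sigma_a)=R_i\setminus R_i=\emptyset$, so $\partial\widehat{R_i}\subset\Sigma\setminus\Sigma_a$. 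For the modified first piece, note that $\widehat{R}_1'=\Sigma\setminus\bigcup_{i\geq 2}\widehat{R_i}$ is closed and contains the open set $\widehat{R_1}$; thus $\widehat{R_1}\subset\operatorname{int}\widehat{R}_1'$ and $\partial\widehat{R}_1'=\widehat{R}_1'\setminus\operatorname{int}\widehat{R}_1'\subset\widehat{R}_1'\setminus\widehat{R_1}=L\subset\Sigma\setminus\Sigma_a$. Therefore $\partial\widehat{\mathcal{R}}=\bigcup_i\partial\widehat{R}_i'\subset\Sigma\setminus\Sigma_a$, and for any probability measure $\mu$ on $\Sigma$ with $\mu(\Sigma_a)=1$ we conclude $\mu(\partial\widehat{\mathcal{R}})\leq\mu(\Sigma\setminus\Sigma_a)=0$.

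The main point that makes the whole argument work is the observation that the boundary of each extended piece is disjoint from $\Sigma_a$: this is precisely what the operations $\widehat{\,\cdot\,}$ and $\widecheck{\,\cdot\,}$, through Lemma \ref{lemincl} and Remark \ref{remequa}, are built to guarantee, and it is where the clopenness of the $R_i$ is genuinely used. Everything else is routine bookkeeping about disjointness and covering; the only mild care needed is in handling the leftover $L$, which is harmless since it lies off $\Sigma_a$ and hence is null for every measure of interest.
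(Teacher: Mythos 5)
Your proof is correct and follows essentially the same route as the paper: extend each clopen $R_i$ to $\widehat{R_i}$, use Lemma \ref{lemdisj} for disjointness, and control the boundaries via Lemma \ref{lemincl} and Remark \ref{remequa}. The only (cosmetic) difference is that you absorb the leftover null set into the first piece, whereas the paper keeps it as a separate $(N{+}1)$-st element $X=\Sigma\setminus\bigcup_i\widehat{R_i}$ and notes it is closed with empty interior; both variants yield $\mu(\partial\widehat{\mathcal{R}})=0$.
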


\begin{proof} Let  $\widehat{\mathcal{R}}=\{\widehat{R_1},...,\widehat{R_N}, X\}$, where $X=\Sigma\setminus\bigcup_{i=1}^N\widehat{R_i}$. The partition is well defined since by Lemma \ref{lemdisj} the sets $\{\widehat{R_1},...,\widehat{R_N} \}$ are disjoint. Observe that  the set $X$ is closed and has empty interior (since $\Sigma_a$ is dense in $\Sigma$). Therefore $\mu(\partial X)=\mu(X\setminus \text{ int }X)=\mu(X)\le  \mu(\Sigma\setminus \Sigma_a)=0$. 
It follows from Lemma \ref{lemincl}  that $\mu(\partial \widehat{R_i})=\mu(\overline{\widehat{R_i}}\setminus \widehat{R_i})=\mu(\overline{R_i}\setminus \widehat{R_i})$. As observed in Remark \ref{remequa} we have that $\overline{R_i}\cap \Sigma_a=R_i$. Therefore
\begin{equation*}
\mu(\partial\widehat{R_i})=\mu(\overline{R_i}\setminus \widehat{R_i})=\mu((\overline{R_i}\cap \Sigma_a)\setminus \widehat{R_i})=\mu(R_i\setminus\widehat{R_i})=0.
\end{equation*}
\end{proof}

\begin{proposition} \label{lem} Let $(\mu_n)_n$ be a sequence of invariant probability measures converging in the weak$^*$ topology to a measure $\mu$. If $\mu(\Sigma_a)=1$ and  $\mu_n(\Sigma_a)=1$, for every      
 $n\in \N$, then 
\begin{equation*}
\limsup_{n\to \infty} h_{\mu_n}(\sigma)\le h_\mu(\sigma).
\end{equation*}
\end{proposition}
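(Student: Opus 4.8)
The plan is to manufacture a single finite partition of $\Sigma$ that is simultaneously a two-sided generator for \emph{every} invariant measure carried by $\Sigma_a$ and has boundary of measure zero for all such measures; once this is in hand, the proposition follows immediately from Proposition \ref{prop:usc}. The compactness that $\Sigma$ lacks will be imported through the embedding of Theorem \ref{topemb}.

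First I would transport the problem to the loop system. By Lemma \ref{topconj} the map $F\colon\Sigma_a\to\overline{\Sigma}$ is a topological conjugacy, so it carries invariant measures to invariant measures, preserves entropy, and turns finite clopen two-sided generators into finite clopen two-sided generators. It therefore suffices to build a finite clopen generator on $\overline{\Sigma}$. To do this I would invoke the embedding: let $\iota\colon\overline{\Sigma}\hookrightarrow\widehat{\Sigma}$ be the continuous, shift-commuting injection into a compact topological Markov shift given by Theorem \ref{topemb}, and let $\mathcal{Q}$ be the partition of $\widehat{\Sigma}$ into one-cylinders. Since $\widehat{\Sigma}$ has a finite alphabet, $\mathcal{Q}$ is a finite partition into clopen sets and is a two-sided generator for every Borel measure on $\widehat{\Sigma}$. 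Setting $\mathcal{Q}':=\{\iota^{-1}(Q):Q\in\mathcal{Q}\}$, continuity of $\iota$ makes each $\iota^{-1}(Q)$ clopen in $\overline{\Sigma}$, so $\mathcal{Q}'$ is finite and clopen; and because $\iota$ intertwines the shifts, $\bigvee_{i=-n}^{n}\sigma^{-i}\mathcal{Q}'=\iota^{-1}\big(\bigvee_{i=-n}^{n}\sigma^{-i}\mathcal{Q}\big)$. As $\overline{\Sigma}$ is Polish and $\iota$ is a continuous injection, by the Lusin--Souslin theorem $\iota$ is a Borel isomorphism onto its image, so the preimages of the cylinders of $\widehat{\Sigma}$ generate the Borel $\sigma$-algebra of $\overline{\Sigma}$ and $\mathcal{Q}'$ is a two-sided generator for every invariant measure on $\overline{\Sigma}$. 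Pulling $\mathcal{Q}'$ back through $F$ then yields a finite clopen partition $\mathcal{R}=\{R_1,\dots,R_N\}$ of $\Sigma_a$ that generates for every invariant measure carried by $\Sigma_a$.

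Finally I would glue this partition onto $\Sigma$. Applying Lemma \ref{lempartition} to $\mathcal{R}$ produces a finite partition $\widehat{\mathcal{R}}$ of $\Sigma$ inducing $\mathcal{R}$ on $\Sigma_a$ with $\nu(\partial\widehat{\mathcal{R}})=0$ for every $\nu\in\M_\sigma$ satisfying $\nu(\Sigma_a)=1$. For such $\nu$ the leftover atom $X=\Sigma\setminus\bigcup_i\widehat{R_i}$ is $\nu$-null, so on the full-measure set $\Sigma_a$ the partition $\widehat{\mathcal{R}}$ agrees with $\mathcal{R}$; since $(\Sigma,\sigma,\nu)$ and $(\Sigma_a,\sigma,\nu)$ are the same measure-preserving system, $\widehat{\mathcal{R}}$ is a two-sided generator for $\nu$ and $h_\nu(\sigma)=h_\nu(\widehat{\mathcal{R}})$. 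Taking $\mathcal{C}:=\{\nu\in\M_\sigma:\nu(\Sigma_a)=1\}$, the pair $(\widehat{\mathcal{R}},\mathcal{C})$ satisfies both hypotheses of Proposition \ref{prop:usc}, so the entropy map is upper semi-continuous on $\mathcal{C}$. As $\mu$ and all $\mu_n$ lie in $\mathcal{C}$ and $\mu_n\to\mu$ weak$^*$, this gives $\limsup_{n\to\infty}h_{\mu_n}(\sigma)\le h_\mu(\sigma)$.

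I expect the main obstacle to be the generating step through the embedding: pulling a finite clopen generator back along the merely continuous (not homeomorphic) injection $\iota$ yields a generator only because $\iota$ is a Borel isomorphism onto its image, which is precisely where the Polish structure and the Lusin--Souslin theorem are needed. Equally, the clopen-ness of the pieces, indispensable for invoking Lemma \ref{lempartition}, must be tracked at each transfer, surviving $\iota^{-1}$ by continuity and $F$ by homeomorphy.
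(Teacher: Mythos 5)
Your proposal is correct and follows essentially the same route as the paper's proof: conjugate $(\Sigma_a,\sigma)$ to the loop system via Lemma \ref{topconj}, pull back the one-cylinder partition of the compact Markov shift of Theorem \ref{topemb} to get a finite clopen generator on $\Sigma_a$, extend it to $\Sigma$ by Lemma \ref{lempartition}, and conclude with Proposition \ref{prop:usc}. Your explicit appeal to the Lusin--Souslin theorem to justify that the pullback along the continuous injection is still a generator is a point the paper leaves implicit, but the argument is the same.
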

\begin{proof} 

It was shown in Lemma \ref{topconj} that there exists a topological conjugacy $F:\Sigma_a\to \overline{\Sigma}$ between $(\Sigma_a, \sigma)$ and $(\overline{\Sigma}, \sigma)$. From  Theorem  \ref{topemb}
there exists a topological embedding $G: \overline{\Sigma}\hookrightarrow \Sigma_0$, where $\Sigma_0$ is a sub-shift of finite type with alphabet $\{1,...,N\}$.  The partition $\mathcal{P}=\{[1],[2],...,[N]\}$ is a generating partition of $(\Sigma_0,\sigma)$  for every invariant probability measure (see \cite[Theorem 8.2]{wa}).  It follows that $\mathcal{Q}=G^{-1} \mathcal{P}$
 is a generating partition for every $\sigma$-invariant measure in $\overline{\Sigma}$.   The continuity of $G$ implies that  $G^{-1}\left( [i] ) \right)$ is open and closed, in particular  
\begin{equation*}
\partial G^{-1}\left( [i]  \right)=\overline{G^{-1}\left( [i] \right)  }\setminus \text{ int } G^{-1}\left( [i]  \right)  =\emptyset.
\end{equation*}
Therefore $\partial \mathcal{Q}=\emptyset$. Let $\mathcal{R}=F^{-1}\mathcal{Q}$. Then $\mathcal{R}$ is a generating partition for $(\Sigma_a,\sigma)$, $\partial \mathcal{R}=\emptyset$ and moreover the elements in $\mathcal{R}$ are open and closed. From Lemma \ref{lempartition} we construct a partition $\widehat{\mathcal{R}}$ of $\Sigma$  that induces $\mathcal{R}$ when restricted to $\Sigma_a$. Since by assumption the measures $\mu$ and $(\mu_n)_n$ give full measure to $\Sigma_a$,  Lemma \ref{lempartition} implies that 
$\mu(\partial \widehat{\mathcal{R}})=0$. Moreover $h_{\mu}(\sigma)=h_{\mu}(\sigma,  \widehat{\mathcal{R}})$ and $h_{\mu_n}(\sigma)=h_{\mu_n}(\sigma,  \widehat{\mathcal{R}})$ for every $n\in\N$. Indeed, for every $\sigma$-invariant probability measure $\nu$  on $\Sigma$ such that $\nu(\Sigma_a)=1$ we have that the systems $(\Sigma, \sigma, \nu)$ and $(\Sigma_a, \sigma |_{\Sigma_a}, \nu |_{\Sigma_a})$ are isomorphic. Therefore 
\begin{equation*}
h_{\nu}(\Sigma, \sigma)=h_{\nu}(\Sigma_a, \sigma) = \lim_{n \to \infty} \frac{1}{n} H_{\nu}(\mathcal{R}^n),
\end{equation*}
since $\mathcal{R}$ is a generating partition for $(\Sigma_a, \sigma)$. Note that for every $n \in \N$ and $\widehat{\mathcal{R}}_{i_1 , \dots , \i_n}  \in \widehat{\mathcal{R}}^n$ we have 
\begin{equation*}
\nu(\widehat{\mathcal{R}}_{i_1 , \dots , \i_n} )= \nu(\widehat{\mathcal{R}}_{i_1 , \dots , \i_n}  \cap \Sigma_a) = \nu|_{\Sigma_a} (\mathcal{R}_{i_1 , \dots , \i_n} ).
\end{equation*}
Thus $h_{\nu}(\sigma)= h_{\nu}(\sigma, \widehat{\mathcal{R}})$. Therefore, Proposition \ref{prop:usc} implies that
\begin{equation*}
\limsup_{n\to\infty} h_{\mu_n}(\sigma)=\limsup_{n\to \infty} h_{\mu_n}(\sigma,\widehat{\mathcal{R}})\le h_\mu(\sigma,\widehat{\mathcal{R}})=h_\mu(\sigma).
\end{equation*}
\end{proof}

\begin{proof}[Proof of  Theorem \ref{thm:usc}]
The above argument proves that if $(\mu_n)_n$ is a sequence of ergodic measures that converges in the weak$^*$ topology to the ergodic measure $\mu$ then 
$\lim_{n \to \infty} h_{\mu_n}(\sigma) \leq h_{\mu}(\sigma)$. Indeed, there exists $a \in \mathcal{A}$ such that $\mu([a]) >0$. Since $\lim_{n \to \infty} \mu_n([a])= \mu([a])>0$, there exists  $N \in \N$ such that for every $n >N$ 
we have that $\mu_n([a])>0$. We then use Proposition \ref{lem} to get the result. If $(\Sigma,\sigma)$ is not transitive we argue similarly. Since $\mu$ is ergodic and $\mu([a])>0$ we know that $\mu$ must be supported in the transitive component containing $\Sigma_a$. Moreover, $\mu$ is supported in a transitive countable Markov shift, say $(\Sigma_0,\sigma)$, which is the countable Markov shift associated to the connected component of the directed graph of $(\Sigma,\sigma)$ containing the vertex $a$. For large enough $n$ we will have $\mu_n([a])>0$, and therefore $\mu_n$ is supported in $\Sigma_0$. We then use Proposition \ref{lem} to conclude the result. 
\end{proof}

\subsection{One-sided countable Markov shifts}

Let $(\Sigma^+, \sigma)$ be a  one-sided Markov shift defined over a countable alphabet $\mathcal{A}$. This means that there exists a matrix $S=(s_{ij})_{\mathcal{A} \times \mathcal{A} }$ of zeros and ones such that
\begin{equation*}
\Sigma^+=\left\{ x \in \mathcal{A} ^{\mathbb{N}} : s_{x_{i} x_{i+1}}=1 \ \text{for every $i \in \mathbb{N}$}\right\}.
\end{equation*}
The \emph{shift map} $\sigma:\Sigma^+ \to \Sigma^+$ is defined by $(\sigma(x))_i=x_{i+1}$. The entropy of $\sigma$ is defined by $h_{top}=h_{top}(\sigma):=  \sup \left\{h_{\mu}(\sigma) : \mu \in \mathcal{E}_{\sigma}	\right\}$. Note that, as in the two-sided setting, it is possible to give a definition of entropy computing the exponential growth of periodic orbits, but for the purposes of this article the above definition suffices.

\begin{proof}[Proof of Corollary~\ref{cor:uscN}]

Denote by $(\Sigma,\sigma)$ the natural extension of $(\Sigma^{+},\sigma)$ and by $\mathcal{E}$ and $\mathcal{E}^{+}$
the corresponding sets of ergodic measures. There exists a bijection $\pi: \mathcal{E}^{+} \to \mathcal{E}$ such that for every $\mu \in \mathcal{E}^{+}$ we have that $h_{\mu}= h_{\pi(\mu)}$ (see \cite[Fact 4.3.2]{d} and \cite[Section 2.3]{sa2}).

\begin{lemma} \label{lem:conv}
Let $(\mu_n)_n, \mu \in \mathcal{E}^{+}$  such that $(\mu_n)_n$ converges weak$^*$  to $\mu$.
Then $(\pi(\mu_n))_n$ converges weak$^*$  to $\pi(\mu)$.\end{lemma}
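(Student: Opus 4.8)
The plan is to exploit the standard description of the natural extension as a factor of the two-sided shift over the one-sided one. Recall that $(\Sigma,\sigma)$ carries the same transition matrix as $(\Sigma^{+},\sigma)$, and that the coordinate projection $p:\Sigma\to\Sigma^{+}$, $p(x)=(x_0,x_1,x_2,\dots)$, is a continuous factor map satisfying $p\circ\sigma=\sigma\circ p$. The defining property of $\pi$, which I take as given from the cited references, is that $\pi(\nu)$ is the unique $\sigma$-invariant measure on $\Sigma$ with $p_{*}\pi(\nu)=\nu$. By Remark \ref{rem:lip} it suffices to show that $\int\phi\,d\pi(\mu_n)\to\int\phi\,d\pi(\mu)$ for every bounded Lipschitz function $\phi:\Sigma\to\R$.

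First I would treat functions depending on finitely many coordinates. Suppose $\phi\in C_b(\Sigma)$ depends only on the coordinates with index in $\{-k,\dots,m\}$. Using $\sigma$-invariance of $\pi(\nu)$,
\begin{equation*}
\int \phi\,d\pi(\nu)=\int \phi\circ\sigma^{k}\,d\pi(\nu).
\end{equation*}
Now $\phi\circ\sigma^{k}$ depends only on the non-negative coordinates $\{0,\dots,m+k\}$, so it is constant on the fibres of $p$ and hence factors as $\phi\circ\sigma^{k}=\psi\circ p$ for a bounded continuous cylinder function $\psi\in C_b(\Sigma^{+})$. Since $p_{*}\pi(\nu)=\nu$, the change of variables gives $\int\phi\,d\pi(\nu)=\int\psi\,d\nu$. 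Applying this with $\nu=\mu_n$ and $\nu=\mu$ and using $\mu_n\to\mu$ weak$^*$ against the fixed $\psi\in C_b(\Sigma^{+})$ yields $\int\phi\,d\pi(\mu_n)\to\int\phi\,d\pi(\mu)$.

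The remaining step is to pass from finite-coordinate functions to arbitrary bounded Lipschitz functions, and this is the only place where the non-compactness requires a small amount of care. Equip $\Sigma$ with its usual metric, for which any two points agreeing on the coordinates $\{-N,\dots,N\}$ lie within distance $2^{-N}$. Given a bounded $L$-Lipschitz $\phi$, I would define $\phi_N$ to be constant on each (nonempty) cylinder determined by the coordinates $\{-N,\dots,N\}$, equal to the value of $\phi$ at an arbitrarily chosen point of that cylinder. Because the cylinders are clopen, $\phi_N$ is continuous; it is bounded and depends on finitely many coordinates; and the Lipschitz bound gives $\|\phi-\phi_N\|_{\infty}\le L\,2^{-N}\to 0$. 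A three-$\varepsilon$ estimate,
\begin{equation*}
\Bigl|\int\phi\,d\pi(\mu_n)-\int\phi\,d\pi(\mu)\Bigr|\le \Bigl|\int\phi_N\,d\pi(\mu_n)-\int\phi_N\,d\pi(\mu)\Bigr|+2\|\phi-\phi_N\|_{\infty},
\end{equation*}
then closes the argument: choose $N$ to make the last term small, then apply the finite-coordinate case to the fixed function $\phi_N$. The main (and mild) obstacle is exactly this uniform approximation; the countability of the alphabet causes no trouble, since $\phi_N$ is built cylinder-by-cylinder and its continuity is automatic from the cylinders being clopen.
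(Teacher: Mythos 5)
Your proof is correct, but it takes a genuinely different route from the paper's. The paper also reduces, via Remark \ref{rem:lip}, to testing against bounded Lipschitz functions $\phi:\Sigma\to\R$, but then invokes a Sinai-type cohomology theorem of Daon \cite[Theorem 3.1]{da}: $\phi$ is cohomologous to a Lipschitz function $\psi$ depending only on future coordinates, so $\int\phi\,d\pi(\nu)=\int\psi\,d\pi(\nu)=\int\rho\,d\nu$ for the induced $\rho:\Sigma^+\to\R$, and weak$^*$ convergence of $(\mu_n)_n$ on $\Sigma^+$ finishes the argument in one line. You avoid the cohomological input entirely: you use $\sigma$-invariance to shift a finite-coordinate function into the future, push it down through the factor map $p$, and then handle a general Lipschitz $\phi$ by uniform approximation with cylinder functions, exploiting that the Lipschitz bound gives $\|\phi-\phi_N\|_{\infty}\le L\,2^{-N}$. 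Your argument is more elementary and self-contained (it only uses that bounded Lipschitz --- indeed bounded uniformly continuous --- functions are uniform limits of cylinder functions), at the cost of an extra approximation step; the paper's is shorter but leans on an external theorem whose full strength (exact cohomology for Lipschitz, weak H\"older and summable-variation potentials) is not really needed here. One small point to tidy in your write-up: the factorization $\phi\circ\sigma^{k}=\psi\circ p$ defines $\psi$ a priori only on $p(\Sigma)$, which can be a proper subset of $\Sigma^{+}$ if some symbols lack predecessors; since $\psi$ is a cylinder function you can extend it continuously and boundedly to all of $\Sigma^{+}$ without affecting any of the integrals (each measure $p_{*}\pi(\nu)=\nu$ is carried by $p(\Sigma)$), so this is harmless, but it deserves a sentence.
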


\begin{proof}
Note that in the notion of  weak$^*$   convergence we can replace the set of test functions  by the space  of bounded  Lipschitz functions (see Remark \ref{rem:lip}). That is, if for every  bounded  Lipschitz function $\phi: \Sigma \to \R$ we have 
\begin{equation*}
\lim_{n \to \infty} \int \phi~d \mu_n = \int \phi ~d \mu,
\end{equation*}
then the sequence $(\mu_n)_n$ converges in the weak$^*$  topology to $\mu$.  A result by Daon \cite[Theorem 3.1]{da} implies that for every Lipschitz  (the result also holds for weakly H\"older and summable variations) function $\phi:\Sigma \to \R$ there exists a cohomologous Lipschitz function
$\psi:\Sigma \to \R$ that depends only on future coordinates. The function $\psi$ can be canonically identified with a Lipschitz function $\rho:\Sigma^+ \to \R$. Thus,
\begin{equation*}
\int_{\Sigma} \phi ~d \pi(\mu_n)= \int_{\Sigma} \psi ~d \pi(\mu_n)=\int_{\Sigma^+} \rho ~d \mu_n.
\end{equation*}
Therefore,
\begin{equation*}
\lim_{n\to \infty }\int_{\Sigma} \phi ~d \pi(\mu_n) = \lim_{n \to \infty}\int_{\Sigma^+} \rho ~d \mu_n= \int_{\Sigma^+} \rho ~d \mu =
\int_{\Sigma} \phi ~d \pi(\mu_n).
\end{equation*}
The result now follows.
\end{proof}

Let $\mu_n, \mu \in \mathcal{E}_{\sigma}$ be  such that $(\mu_n)_n$ converges weak$^*$ to $\mu$. Lemma \ref{lem:conv} implies that $(\pi(\mu_n))_n$ converges weak$^*$ to $\pi(\mu)$. Moreover, for every $n \in \N$ we have that $h_{\pi(\mu_n)}=h_{\mu_n}$  and $h_{\pi(\mu)}=h_{\mu}$. Since, by Theorem \ref{thm:usc}, the entropy map is upper semi-continuous in  $(\Sigma,\sigma)$ we obtain the result.
\end{proof}

\begin{remark} \label{rem:infentropy}
The finite entropy assumption in Theorem  \ref{thm:usc} and in Corollary~\ref{cor:uscN} is essential as the following example shows.
Let $(\Sigma^+, \sigma)$ be the full shift on a countable alphabet, note that $h_{top}(\sigma)=\infty$. Denote by $\mathcal{P}$ the partition formed by the length one cylinders. This is a generating partition. Let $h \in \R^+$ be a positive real number and  $(a_n)_n$ be the sequence defined by $a_n= \frac{h}{\log n}$ for every $n >1$. Consider the following stochastic vector
\begin{equation*}
\vec{p}_n:= \left(1-a_n, \frac{a_n}{n} , 	 \frac{a_n}{n}, \dots,  \frac{a_n}{n}, 0, 0, \dots	\right),
\end{equation*} 
where the term $a_n/n$ appears $n$ times.  Let $\mu_n$ be the Bernoulli measure defined by $\vec{p}_n$. Note that the sequence $(\mu_n)_n$ converges in the weak$^*$  topology to a Dirac measure  $\delta_1$ supported on the fixed point at the cylinder $C_1$. Note that 
\begin{eqnarray*}
H_{\mu_n}(\mathcal{P})= -(1-a_n)\log(1-a_n) -a_n \log a_n -a_n \log \frac{1}{n}. 
  \end{eqnarray*}
Therefore,
\begin{eqnarray*}
\lim_{n \to \infty} H_{\mu_n}(\mathcal{P}) = h > 0 = H_{\delta_1}(\mathcal{P}).
\end{eqnarray*}
The above example shows that the map $\nu \to H_{\nu}(\mathcal{P})$ need not to be continuous for countable generating partitions. Moreover, since the measures $\mu_n$ are Bernoulli we have that $h_{\mu_n}(\sigma)=H_{\mu_n}(\mathcal{P})$, and therefore the above argument shows that the entropy map is not upper semi-continuous:
\begin{eqnarray*}
\lim_{n \to \infty} h_{\mu_n}(\sigma) = h > 0 = h_{\delta_1}(\sigma). 
\end{eqnarray*}
In other words, the entropy map could fail to be upper semi-continuous for general dynamical systems defined on non-compact spaces, even if there exists a uniform (countable) generating partition with no boundary. The construction of the above example is based on examples constructed by Walters (\cite[p.184]{wa}) and by Jenkinson, Mauldin and Urba\'nski \cite[p.774]{jmu}. This example also shows that the entropy map is not upper semi-continuous even if we consider a sequence of measures for which their entropy is uniformly bounded.
\end{remark}

\section{Measures of maximal entropy} \label{sec:mme}
A continuous map $T:(X,d ) \to (X,d)$ defined on a compact metric space for which the entropy map is upper semi-continuous has a measure of maximal entropy. Indeed, from the variational principle there exists a sequence of ergodic invariant probability measures $(\mu_n)_n$ such that $\lim_{n \to \infty} h_{\mu_n}(T)= h_{top}(T)$. Since the space of invariant measures $\M_T$ is compact, there exists an invariant  measure $\mu$ which is an accumulation point for $(\mu_n)_n$. It follows from the fact that the entropy map is upper semi-continuous  that
\begin{equation*}
h_{top}(T)= \lim_{n \to \infty} h_{\mu_n}(T) \leq h_{\mu}(T).
\end{equation*}
Therefore, $\mu$ is a measure of maximal entropy. For countable Markov shifts the variational principle holds (see \cite{gu1,gu2}) and the entropy map is upper semi-continuous (see Theorem~\ref{thm:usc} and Corollary~\ref{cor:uscN}), however the space $\M_{\sigma}$ is no longer compact.  Despite this, under a convergence assumption we can prove the existence of measures of maximal entropy. Indeed, Corollary~\ref{cor:uscN} provides a new proof of the following result by Gurevich and Savchenko \cite[Theorem 6.3]{gs}.

\begin{proposition} \label{prop:mme}
Let $(\Sigma, \sigma)$ be a finite entropy countable Markov shift. Let $(\mu_n)_n$ be a sequence of ergodic measures
such that $\lim_{n \to \infty} h_{\mu_n}(\sigma) =h_{top}(\sigma)$. If $(\mu_n)_n$ converges in the  weak$^*$  topology to an ergodic measure $\mu$ then
$h_{\mu}(\sigma)=h_{top}(\sigma)$.
\end{proposition}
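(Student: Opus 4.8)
The plan is to deduce the statement directly from the upper semi-continuity of the entropy map established in Theorem \ref{thm:usc} (or Corollary \ref{cor:uscN} in the one-sided case) together with the variational principle. This mirrors the compact-case argument recalled at the beginning of this section; the essential difference is that here weak$^*$ convergence of the sequence is assumed rather than extracted from compactness of $\M_\sigma$, which fails in the non-compact setting.

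First I would observe that, since each $\mu_n$ and the limit $\mu$ are ergodic and $(\Sigma,\sigma)$ has finite topological entropy, the hypotheses of Theorem \ref{thm:usc} are met, so the entropy map is upper semi-continuous at the ergodic measure $\mu$. This yields
\[
\limsup_{n\to\infty} h_{\mu_n}(\sigma) \leq h_{\mu}(\sigma).
\]
Next I would invoke the standing assumption $\lim_{n\to\infty} h_{\mu_n}(\sigma)=h_{top}(\sigma)$, which turns the left-hand side into a genuine limit equal to $h_{top}(\sigma)$; substituting gives $h_{top}(\sigma)\leq h_{\mu}(\sigma)$. The reverse inequality $h_{\mu}(\sigma)\leq h_{top}(\sigma)$ is immediate from the variational principle, since $\mu\in\M_\sigma$. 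Combining the two forces $h_{\mu}(\sigma)=h_{top}(\sigma)$, so that $\mu$ is a measure of maximal entropy.

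I expect no genuine obstacle here: once upper semi-continuity is in hand the argument is purely formal and consists only of the two displayed inequalities. The single point meriting a word of care is that non-compactness of $\M_\sigma$ is precisely what prevents one from dropping the convergence hypothesis, which is why the proposition must be stated conditionally on $(\mu_n)_n$ converging in the weak$^*$ topology to an ergodic measure. The one-sided case is handled identically, invoking Corollary \ref{cor:uscN} in place of Theorem \ref{thm:usc}.
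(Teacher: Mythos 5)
Your argument is correct and is exactly the paper's route: the authors present this proposition as an immediate consequence of Theorem~\ref{thm:usc} (resp.\ Corollary~\ref{cor:uscN}), giving $\limsup_{n\to\infty} h_{\mu_n}(\sigma)\leq h_{\mu}(\sigma)$ at the ergodic limit, combined with the hypothesis $\lim_{n\to\infty}h_{\mu_n}(\sigma)=h_{top}(\sigma)$ and the variational principle for the reverse inequality. Nothing is missing.
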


It might happen that there is a sequence $(\mu_n)_n$  with $\lim_{n \to \infty} h_{\mu_n}(\sigma)=h_{top}(\sigma)$, but $(\mu_n)_n$ does not converge in the weak$^*$  topology. Examples of countable Markov shifts with this property have been known for a long time. In \cite{ru2} a simple construction of a Markov shift of any given entropy  with no measure of maximal entropy is provided. In \cite{gu3,gn} examples are constructed of finite entropy countable Markov shifts having a measure of maximal entropy $\mu_{max}$ and sequences of ergodic measures $(\mu_n)_n , (\nu_n)_n$ with $\lim_{n \to \infty} h_{\mu_n}(\sigma) = \lim_{n \to \infty} h_{\nu_n}(\sigma) =h_{top}(\sigma)$ such that $(\mu_n)_n$ converges  in the weak$^*$  topology to $\mu_{max}$ and $(\nu_n)_n$ does not have any accumulation point. 

The continuity properties of the entropy map also have consequences in the study of thermodynamic formalism. Let $(\Sigma^+, \sigma)$ be a transitive one-sided countable Markov shift of finite entropy and $\phi:\Sigma^+ \to \R$ a continuous bounded function of summable variations. That is $\sum_{n=1}^{\infty} \text{var}_n(\phi) < \infty$, where 
\begin{equation*}
\text{var}_n(\phi):= \sup\left\{\phi(x) - \phi(y) : x_i=y_i , i \in \{1, \dots, n\}		\right\}.
\end{equation*}
Sarig (see \cite{sa2} for a survey on the topic) defined a notion of pressure in this context, the so called Gurevich pressure, that we denote by $P(\phi)$. He proved the following variational principle
\begin{equation*}
P(\phi)= \sup \left\{ h_\mu(\sigma)+ \int \phi~d \mu : \mu \in \M_{\sigma} \right\}=  \sup \left\{ h_\mu(\sigma) + \int \phi~d \mu : \mu \in \mathcal{E}_{\sigma} \right\}
\end{equation*}
A measure $\mu \in \M_{\sigma}$ such that $P(\phi)= h_\mu(\sigma) + \int \phi~d \mu$ is called \emph{equilibrium measure}. It directly follows from Corollary~\ref{cor:uscN} that

\begin{proposition} \label{prop:equ}
Let $(\Sigma^+, \sigma)$ be a finite entropy countable Markov shift and $\phi:\Sigma \to \R$ a bounded function of summable variations.
 Let $(\mu_n)_n$ be a sequence of ergodic measures
such that $\lim_{n \to \infty} \left( h_{\mu_n}(\sigma) + \int \phi~d \mu_n   \right) =P(\phi)$. If $(\mu_n)_n$ converges in the weak$^*$   topology to an ergodic measure $\mu$ then
$P(\phi)=  h_{\mu}(\sigma) + \int \phi~d \mu$.
\end{proposition}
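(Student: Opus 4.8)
The plan is to establish the two inequalities whose combination gives the claimed identity. The inequality $h_\mu(\sigma) + \int \phi\,d\mu \le P(\phi)$ is immediate, since $\mu \in \mathcal{E}_\sigma \subset \M_\sigma$ and hence $\mu$ is admissible in the supremum defining the Gurevich pressure. All the work is therefore in the reverse inequality, and this is exactly where Corollary~\ref{cor:uscN} enters.

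First I would observe that $\phi$ lies in $C_b(\Sigma^+)$. Indeed, summability of the variations forces $\text{var}_n(\phi)\to 0$, so $\phi$ is uniformly continuous, and by hypothesis $\phi$ is bounded. The weak$^*$ convergence $\mu_n\to\mu$ then yields $\lim_{n\to\infty}\int\phi\,d\mu_n=\int\phi\,d\mu$.

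Next I would apply Corollary~\ref{cor:uscN}: since $(\Sigma^+,\sigma)$ has finite entropy and the $\mu_n$ and $\mu$ are ergodic with $\mu_n\to\mu$ weak$^*$, the entropy map is upper semi-continuous at $\mu$, so $\limsup_{n\to\infty}h_{\mu_n}(\sigma)\le h_\mu(\sigma)$. Because the potential term converges, I can separate the two contributions in the limit and estimate
\begin{align*}
P(\phi)
&= \lim_{n\to\infty}\left( h_{\mu_n}(\sigma)+\int\phi\,d\mu_n\right)\\
&= \limsup_{n\to\infty}h_{\mu_n}(\sigma) + \lim_{n\to\infty}\int\phi\,d\mu_n\\
&\le h_\mu(\sigma)+\int\phi\,d\mu,
\end{align*}
where the middle equality is valid precisely because $\int\phi\,d\mu_n$ converges. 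This is the reverse inequality, and together with the variational principle it yields $P(\phi)=h_\mu(\sigma)+\int\phi\,d\mu$.

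There is no serious obstacle here; the argument is a clean combination of upper semi-continuity (for the entropy term) and weak$^*$ convergence (for the potential term). The only point that genuinely requires care is that these two mechanisms treat the two summands differently: boundedness of $\phi$, not merely summability of its variations, is what licenses $\int\phi\,d\mu_n\to\int\phi\,d\mu$, and without it the splitting of the limit would be unavailable and the scheme would break down.
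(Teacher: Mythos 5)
Your argument is correct and is exactly the intended one: the paper gives no written proof beyond asserting that the proposition ``directly follows from Corollary~\ref{cor:uscN}'', and your fleshing-out (upper semi-continuity for the entropy term, weak$^*$ convergence of $\int\phi\,d\mu_n$ for the bounded continuous potential, and the variational principle for the reverse inequality) is precisely what that assertion presupposes. Your closing remark that boundedness of $\phi$ is what licenses the splitting of the limit is a fair and accurate observation.
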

Note that the thermodynamic formalism of a two-sided countable Markov shift can be reduced to the one-sided case (see \cite[Section 2.3]{sa2}).

\section{Suspension  flows} \label{sec:flo}

Let $(\Sigma, \sigma)$ be a finite entropy  countable Markov shift and let $\tau: \Sigma \to \R^+$ be a locally H\"older potential 
bounded away from zero. Consider the space $Y= \left\{ (x,t)\in \Sigma  \times \R \colon 0 \le t \le\tau(x) \right\}$,
with the points $(x,\tau(x))$ and $(\sigma(x),0)$ identified for each $x\in \Sigma $. The \emph{suspension flow} over $\Sigma$
with \emph{roof function} $\tau$ is the semi-flow $\Phi= (\phi_t)_{t \in \R}$ on $Y$ defined by
$ \phi_t(x,s)= (x,s+t)$ whenever $s+t\in[0,\tau(x)]$. Denote  by $\mathcal{M}_{\Phi}$ the space of flow invariant probability measures. Let 
\begin{equation}
\mathcal{M}_\sigma(\tau):= \left\{ \mu \in \mathcal{M}_{\sigma}: \int \tau ~d \mu < \infty \right\}.
\end{equation}
It follows directly from results by Ambrose and Kakutani \cite{ak} that the map $R \colon \mathcal{M}_\sigma \to \mathcal{M}_\Phi$, defined by
\begin{equation*} \label{eq:R map}
R(\mu)=\frac{(\mu \times  \text{Leb})|_{Y} }{(\mu \times  \text{Leb})(Y)},
\end{equation*}
where  \text{Leb} is the one-dimensional Lebesgue measure, is a bijection. Denote by $\mathcal{E}_{\Phi}$ the set of ergodic flow invariant measures.   Let  $F \colon Y\to\R$ be a continuous function. Define
$\Delta_F\colon\Sigma \to\R$~by
\[
\Delta_F(x):=\int_{0}^{\tau(x)} F(x,t) ~dt.
\]
Kac's Lemma states that if  $\nu \in \M_{\Phi}$ is an invariant measure  that can be written as
$$\nu=\frac{\mu \times  \text{Leb}} {(\mu \times  \text{Leb})(Y)},$$
where $\mu \in\M_{\sigma}$, then
\begin{equation*} \label{eq:rela}
\int_{Y}F ~d \nu= \frac{\int_\Sigma \Delta_F ~d\mu}{\int_\Sigma\tau~d \mu}.
\end{equation*}
The following Lemma describes the relation between weak$^*$  convergence in $\M_{\Phi}$ with that in $\M_{\sigma}$.

\begin{lemma} \label{lem:weak} 
Let $(\nu_n), \nu \in \M_{\Phi}$ be flow invariant probability measures such that
\begin{equation*}
\nu_n=\frac{\mu_n \times Leb}{\int \tau~d \mu_n} \quad \text{ and } \quad \nu= \frac{\mu_n \times Leb}{\int \tau~d \mu_n} 		
\end{equation*}
where $(\mu_n)_n , \mu \in \M_{\sigma}$ are shift invariant probability measures.  The sequence $(\nu_n)_n$ converges  in the weak$^*$  topology  to $\nu$ then
\begin{equation*}
(\mu_n)_n \text{ converges in the weak$^*$  topology to } \mu \quad \text{  and  } \quad \lim_{n \to \infty} \int \tau~d \mu_n = \int \tau~d\mu. 		
\end{equation*}
\end{lemma}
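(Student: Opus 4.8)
The plan is to test the weak$^*$ convergence $\nu_n\to\nu$ against well-chosen functions on $Y$ and to read off, through Kac's Lemma, the corresponding information about the base measures $\mu_n$. The guiding idea is that \emph{every} bounded continuous $g\colon\Sigma\to\R$ can be realised as $\Delta_F$ for a suitable $F\in C_b(Y)$, so that the flow integral $\int_Y F\dd\nu_n$ becomes, up to the normalising factor $\int_\Sigma\tau\dd\mu_n$, the integral $\int_\Sigma g\dd\mu_n$. Since $\nu_n,\nu$ are probability measures we automatically have $0<\int_\Sigma\tau\dd\mu_n<\infty$ and $0<\int_\Sigma\tau\dd\mu<\infty$, so every ratio below is well defined.

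First I would exploit the hypothesis that $\tau$ is bounded away from zero, say $\tau\ge c>0$. Fix a continuous bump function $\rho\colon\R\to\Rnon$ whose support is a compact subset of the open interval $(0,c)$ and with $\int_0^\infty\rho(t)\dd t=1$. Given any $g\in C_b(\Sigma)$, set $F(x,t):=g(x)\rho(t)$. Because $\tau(x)\ge c$ and $\rho$ vanishes on a neighbourhood of $0$ and on $[\,\sup\operatorname{supp}\rho,\infty)$, the function $F$ vanishes near both ends of every fibre $[0,\tau(x)]$, hence identically near the seam along which $(x,\tau(x))$ is identified with $(\sigma(x),0)$. Thus $F$ descends to a genuinely continuous function on $Y$, and it is bounded since $|F|\le\|g\|_\infty\|\rho\|_\infty$. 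A direct computation gives $\Delta_F(x)=g(x)\int_0^{\tau(x)}\rho(t)\dd t=g(x)$, because $\operatorname{supp}\rho\subset(0,c)\subset[0,\tau(x)]$.

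Next I would apply Kac's Lemma to $\nu_n$ and to $\nu$, obtaining
\begin{equation*}
\int_Y F\dd\nu_n=\frac{\int_\Sigma g\dd\mu_n}{\int_\Sigma\tau\dd\mu_n},\qquad
\int_Y F\dd\nu=\frac{\int_\Sigma g\dd\mu}{\int_\Sigma\tau\dd\mu}.
\end{equation*}
As $F\in C_b(Y)$ and $\nu_n\to\nu$ weak$^*$, the left-hand sides converge, so
\begin{equation*}
\frac{\int_\Sigma g\dd\mu_n}{\int_\Sigma\tau\dd\mu_n}\longrightarrow\frac{\int_\Sigma g\dd\mu}{\int_\Sigma\tau\dd\mu}
\end{equation*}
for every $g\in C_b(\Sigma)$. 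Choosing $g\equiv 1$, for which $\int g\dd\mu_n=\int g\dd\mu=1$, yields $\bigl(\int_\Sigma\tau\dd\mu_n\bigr)^{-1}\to\bigl(\int_\Sigma\tau\dd\mu\bigr)^{-1}$; since the limit is a positive finite number, continuity of $x\mapsto 1/x$ on $(0,\infty)$ gives $\int_\Sigma\tau\dd\mu_n\to\int_\Sigma\tau\dd\mu$, the second assertion of the lemma.

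Finally, for arbitrary $g\in C_b(\Sigma)$ I would multiply the displayed quotient convergence by the now-established convergence $\int_\Sigma\tau\dd\mu_n\to\int_\Sigma\tau\dd\mu\in(0,\infty)$ to conclude $\int_\Sigma g\dd\mu_n\to\int_\Sigma g\dd\mu$, which is precisely weak$^*$ convergence $\mu_n\to\mu$. I expect the only delicate step to be the construction of $F$: the quotient defining $Y$ forces any test function to agree along the seam, and it is exactly the hypothesis $\inf\tau>0$ that lets me place $\rho$ strictly inside every fibre $[0,\tau(x)]$, so that $F$ vanishes near the seam, descends continuously, and still satisfies $\Delta_F=g$. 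If $\tau$ were allowed to approach zero this construction would break down and a more careful argument would be required.
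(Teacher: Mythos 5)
Your proof is correct and follows essentially the same route as the paper: realise each bounded continuous $g$ on $\Sigma$ as $\Delta_F$ for a bounded continuous $F$ on $Y$, apply Kac's Lemma, first take $g\equiv 1$ to get $\int\tau\,\mathrm{d}\mu_n\to\int\tau\,\mathrm{d}\mu$, then deduce $\int g\,\mathrm{d}\mu_n\to\int g\,\mathrm{d}\mu$. The only (cosmetic) difference is the choice of $F$: the paper takes $F(x,t)=\frac{g(x)}{\tau(x)}\psi'\bigl(t/\tau(x)\bigr)$ with $\psi'(0)=\psi'(1)=0$, whereas you take $F(x,t)=g(x)\rho(t)$ with $\rho$ a bump supported in $(0,\inf\tau)$; both exploit $\inf\tau>0$ to make $F$ vanish near the seam and remain bounded.
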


\begin{proof} Assume first that $(\nu_n)_n$ converges in the weak$^*$  topology to $\nu$. Let $f :\Sigma \to \R$ be a bounded continuous function. Following Barreira, Radu and Wolf \cite{brw} there exist a continuous function $F:Y \to \R$ such that
\begin{equation} \label{eq:f}
f(x)= \Delta_{F}(x):= \int_0^{\tau(x)} F(x,t)~dt.
\end{equation} 
Indeed, define $F(x,t): Y \to \R$ by
\begin{equation*}
F(x,t):= \frac{f(x)}{\tau(x)} \psi' \left( \frac{t}{\tau(x)}		\right),
\end{equation*}
where $\psi:[0,1] \to [0,1]$ is a $C^1$ function such that $\psi(0)=0$, $\psi(1)=1$ and $\psi'(0)=\psi'(1)=0$. Note that since $\tau$ is bounded away from zero, $F(x,t)$ is continuous and bounded. Therefore
\begin{equation*}
\lim_{n \to \infty} \int F~d \nu_n = \int F~d \nu.
\end{equation*}
By Kac's Lemma we have that
\begin{equation} \label{eq:k}
\lim_{n \to \infty} \frac{\int \Delta_F~d \mu_n}{\int \tau~d \mu_n} =\frac{\int \Delta_F~d \mu}{\int \tau~d \mu}.
\end{equation}
In particular if $f=1$ is the constant function equal to one, we obtain
\begin{equation} \label{eq:ratio}
\lim_{n \to \infty} \frac{\int \tau~d \mu_n}{\int \tau~d \mu}=1.
\end{equation}
Let  $f :\Sigma \to \R$ be a bounded continuous function, then it follows from equations \eqref{eq:f},  \eqref{eq:k}  and \eqref{eq:ratio} that
\begin{equation*}
\lim_{n \to \infty} \int f~d \mu_n =\int f~d \mu.
\end{equation*}
Therefore $(\mu_n)$ converges weak$^*$  to $\mu$.
\end{proof}

\begin{proposition} \label{thm:susp}
 If $(\nu_n)_n$ is a sequence of ergodic measures in $\mathcal{E}_{\Phi}$ converging to an ergodic measure  $\nu$, then 
\begin{equation*}
 \limsup_{n \to \infty}  h_{\nu_n}(\Phi) \leq h_{\nu}(\Phi).
\end{equation*}
\end{proposition}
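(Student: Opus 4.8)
The plan is to reduce the statement about the suspension flow $\Phi$ to the already-established upper semi-continuity for the base shift $\sigma$ (Theorem~\ref{thm:usc}), via the Abramov formula and the bijection $R$ together with Lemma~\ref{lem:weak}. First I would recall the Abramov formula, which relates the entropy of a flow-invariant measure to that of the corresponding shift-invariant measure: if $\nu = R(\mu)$ for $\mu \in \M_\sigma(\tau)$, then
\begin{equation*}
h_\nu(\Phi) = \frac{h_\mu(\sigma)}{\int \tau \dd \mu}.
\end{equation*}
Since each $\nu_n$ and $\nu$ are ergodic flow-invariant measures, by the Ambrose--Kakutani correspondence each is of the form $R(\mu_n)$, respectively $R(\mu)$, for uniquely determined ergodic shift-invariant measures $\mu_n, \mu \in \M_\sigma(\tau)$, with $\int \tau \dd \mu_n < \infty$ and $\int \tau \dd \mu < \infty$ (finiteness being automatic since the flow measures are probability measures).

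Next I would invoke Lemma~\ref{lem:weak}: from the hypothesis that $(\nu_n)_n$ converges weak$^*$ to $\nu$, the lemma gives simultaneously that $(\mu_n)_n$ converges weak$^*$ to $\mu$ and that $\lim_{n\to\infty} \int \tau \dd \mu_n = \int \tau \dd \mu$. This is the key input that makes the denominators in the Abramov formula behave well under the limit. Combining Abramov's formula with these two convergences, I would write
\begin{equation*}
\limsup_{n\to\infty} h_{\nu_n}(\Phi) = \limsup_{n\to\infty} \frac{h_{\mu_n}(\sigma)}{\int \tau \dd \mu_n} = \frac{1}{\int \tau \dd \mu}\limsup_{n\to\infty} h_{\mu_n}(\sigma),
\end{equation*}
where pulling the convergent, strictly positive denominator (recall $\tau$ is bounded away from zero, so $\int \tau \dd \mu > 0$) out of the $\limsup$ is justified. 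Then Theorem~\ref{thm:usc} applied to the ergodic shift measures $\mu_n \to \mu$ gives $\limsup_{n\to\infty} h_{\mu_n}(\sigma) \le h_\mu(\sigma)$, and one more application of Abramov's formula to $\nu = R(\mu)$ yields
\begin{equation*}
\limsup_{n\to\infty} h_{\nu_n}(\Phi) \le \frac{h_\mu(\sigma)}{\int \tau \dd \mu} = h_\nu(\Phi),
\end{equation*}
which is the desired conclusion.

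The main obstacle I anticipate is verifying that $\mu$ is genuinely ergodic for $\sigma$ (so that Theorem~\ref{thm:usc}, which applies on the set of ergodic measures, is available) and that the base measures lie in $\M_\sigma(\tau)$ with finite integral; both should follow from the Ambrose--Kakutani bijection $R$ between $\mathcal{E}_\sigma$ and $\mathcal{E}_\Phi$ restricted to measures with integrable roof function, but this correspondence and the ergodicity transfer should be stated carefully. A secondary subtlety is the handling of the $\limsup$ with a varying denominator: because $\int \tau \dd \mu_n \to \int \tau \dd \mu$ and this limit is bounded below by $\inf \tau > 0$, the ratio manipulation is legitimate, but if $\limsup_{n} h_{\mu_n}(\sigma)$ were infinite the argument would need the finite-entropy hypothesis on $(\Sigma,\sigma)$ to rule this out; here finiteness of $h_{top}(\sigma)$ ensures all the shift entropies are uniformly bounded, so no difficulty arises.
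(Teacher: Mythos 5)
Your proposal is correct and follows essentially the same route as the paper's own proof: the Ambrose--Kakutani correspondence to pass to base measures, Lemma~\ref{lem:weak} for the weak$^*$ convergence of $(\mu_n)_n$ and of $\int\tau\dd\mu_n$, Abramov's formula, and Theorem~\ref{thm:usc} on the base. The subtleties you flag (ergodicity transfer, positivity of the denominator, finite entropy) are handled implicitly in the paper in exactly the way you describe.
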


\begin{proof}
Let $(\nu_n)_n$ be a sequence in $\mathcal{E}_{\Phi}$ that converges in the weak$^*$  topology to $\nu \in \mathcal{E}_{\Phi}$. Note that there exists a sequence of ergodic measures $(\mu_n)_n \in  \mathcal{M}_{\sigma}$ and $\mu \in \mathcal{M}_{\sigma}$ ergodic such that
\begin{equation*}
\nu_n= \frac{\mu_n \times \text{Leb}}{(\mu_n \times \text{Leb})(Y)} \ \text{ and }  \  \nu= \frac{\mu \times \text{Leb}}{(\mu_n \times \text{Leb})(Y)}.\end{equation*}
By Abramov's formula \cite{a}, for any $\nu' \in \mathcal{M}_{\Phi}$, with $\nu'= \frac{\mu' \times \text{Leb}}{(\mu' \times \text{Leb})(Y)} $ we have that
\begin{equation*}
h_{\nu'}(\Phi)=\frac{h_{\mu'}(\sigma)}{\int \tau~d \mu'}.
\end{equation*}
Recall that by Lemma \ref{lem:weak} we have that  $\nu_n \to \nu$ in the weak$^*$   topology  implies that $\mu_n \to \mu$  in the weak$^*$  topology and 
$ \lim_{n \to \infty} \int \tau ~d \mu_n = \int \tau ~d \mu.$   Since $h_{top}(\sigma) < \infty$ by Theorem~\ref{thm:usc} or Corollary~\ref{cor:uscN} we have that 
\begin{equation*}
\limsup_{n \to \infty} h_{\mu_n}(\sigma) \leq h_{\mu}(\sigma).
\end{equation*}
Therefore, 
\begin{eqnarray*}
\limsup_{n \to \infty} h_{\nu_n}(\Phi) = \limsup_{n \to \infty} \frac{h_{\mu_n}(\sigma)}{\int \tau ~d \mu_n } =\frac{ \limsup_{n \to \infty}  h_{\mu_n}(\sigma)}{\int \tau ~d \mu} \leq \frac{h_{\mu}(\sigma)}{\int \tau ~d \mu}= h_{\nu}(\Phi).
\end{eqnarray*}

\end{proof}


\end{document}